\documentclass[preprint]{elsarticle}
\usepackage{amsmath}
\usepackage{amsthm,nccmath}
\usepackage{amssymb}
\usepackage{lipsum} 
\usepackage{graphicx}
\graphicspath{ {./images/} }
\newcommand{\szero}{\mathbf{0}}
\newcommand{\sym}{\operatorname{sym}}

\newtheorem{theorem}{Theorem}[section]
\newtheorem{corollary}{Corollary}[theorem]

\theoremstyle{definition}
\newtheorem{definition}{Definition}[section]
\newtheorem{proposition}{Proposition}[section]
\newtheorem{example}{Example}[section]
\newtheorem{remark}{Remark}[section]

\newtheorem{protocol}{Protocol}
\newtheorem{algorithm}{Algorithm}

\usepackage{hyperref}
\usepackage{enumitem}
\usepackage{algorithm}
\usepackage{multirow}
\usepackage{tabularx}
\usepackage{caption}
\usepackage{appendix}
\usepackage{comment}
\usepackage[noend]{algpseudocode}
\hypersetup{
    colorlinks=true,
    linkcolor=black,
    citecolor=black,
    urlcolor=black
}
\begin{document}

\title{Solving one-sided linear systems over symmetrized and supertropical semirings}

\author[rvt]{Sulaiman Alhussaini\corref{cor1}}
\ead{saa399@student.bham.ac.uk}

\author[rvt]{Serge{\u\i} Sergeev}
\ead{s.sergeev@bham.ac.uk}

\address[rvt]{University of Birmingham, School of Mathematics, Edgbaston B15 2TT}

\cortext[cor1]{Corresponding author.  email: saa399@student.bham.ac.uk}

\begin{abstract}
 One-sided linear systems of the form ``$Ax=b$'' are well-known and extensively studied over the tropical (max-plus) semiring and wide classes of related idempotent semirings. The usual approach is to first find the greatest solution to such system in polynomial time and then to solve a much harder problem of finding all minimal solutions. We develop an extension of this approach to the same systems over two well-known extensions of the tropical semiring: symmetrized and supertropical, and discuss the implications of our findings for the tropical cryptography.  
\end{abstract}

\begin{keyword}
layered tropical semiring, symmetrization, supertropical, one-sided linear system, Stickel protocol
\vskip0.1cm
{\it{AMS Classification:}} 15A80, 15A06, 94A60
\end{keyword}


\maketitle

\section{Introduction}
The symmetrized tropical semiring $\mathbb{S}$ is an extension of the tropical semiring $\mathbb{R}_{\max}$, similar to how the integers $\mathbb{Z}$ extend the natural numbers $\mathbb{N}$. This extension introduces the notion of signs (positive, negative, or balanced) into the framework of tropical algebra. Various algebraic problems over symmetrized tropical semiring are extensively treated by Baccelli et al.~\cite{baccelli_book} and Gaubert~\cite{GaubertThesis}, where the formulation of a tropical version of Cramer's formulas using the symmetrized semiring is one of the most notable results. A related but different extension is the supertropical semiring introduced by Izhakian~\cite{izhakian_supertropical}, which instead distinguishes tangible and ghost elements and uses the ghost surpass relation to recover some classical linear algebra features. Both extensions can be seen as special cases of the tropical layered semiring discussed in Akian, Gaubert and Guterman~\cite{AkianGaubertGuterman2014}. Note that the linear algebra over a more general structure called semiring pairs has been developed more recently in Akian, Gaubert and Rowen~\cite{AkianGaubertRowen2023}.

Solving the one-sided system $A \otimes x=b$  over the tropical semiring is a ``classical'' problem of the tropical matrix algebra, which was first studied in 1960's by Cuninghame-Green, see the monograph~\cite{Cuninghame-Green1979} and references therein, and Vorob'yev~\cite{vorobyev1967extremal}. They gave a formula for the greatest candidate solution of this system, but the minimal solutions were systematically studied much later by, e.g., Markovskii~\cite{Markovskii2004}, and over more general semirings by Di Nola et al.~\cite{DiNola87}. The relation between the problem of finding minimal solutions of $A\otimes x=b$ with the minimal hypergraph transversal problem was discussed by Elbassioni~\cite{Elbassioni}.

 In the tropical semiring, determining the solvability of the system $A\otimes x=b$ is straightforward, and the entire solution set can be fully described by computing a unique greatest solution, which is easily obtained using an explicit formula. Furthermore, all minimal solutions can be identified by finding all minimal covers of a set using some subsets. However, the situation becomes more complex in the case of the symmetrized and supertropical semirings. Here, our immediate aim is to give a suitable candidate for the greatest modulus solution (which turns out to be almost as easy as in the tropical case) and identify a ``representative'' set of minimal modulus solutions, which is done algorithmically by extending the minimal covers for the associated tropical system.  

Note that some of the previous works have studied a related problem in the symmetrized setting, particularly systems of linear balances (where the exact equation is replaced by a more elaborate relation called ``balance''). Baccelli et al.~\cite{baccelli_book} provide foundational results on solving such systems with balances: in particular, a unique signed solution of such system can be found by the tropical analogue of Cramer rules. In the supertropical semiring, linear algebra has been developed by Izhakian and Rowen~\cite{izhakian_supertropical}, focusing on vector spaces and bases. However, they also introduce another specific relation to replace the precise equality. In the present paper we address the linear systems $A \otimes x = b$ with precise equalities, as these directly relate to important cryptographic applications, such as breaking variants of the Stickel protocol, where solving such systems allows the attacker to recover the shared secret key. In this relation, we would like to attribute the initial idea of this paper to the MSc dissertation of Elt~\cite{Sarah_elt}, in which several implementations of Stickel protocol over the symmetrized semiring were suggested and analyzed.   

This paper is organized as follows: Section 2 begins with preliminaries and basic definitions related to tropical and layered tropical algebra, and one-sided linear systems over the tropical semiring. In Section 3, we introduce the new methods for addressing the solvability of linear systems over the symmetrized and supertropical semirings.

\section{Preliminaries}
In this section, we present some of the standard and less standard definitions of tropical algebra and matrix algebra over semirings. We also review some well-established theories concerning the solvability of tropical linear systems. To underscore the relevance of the results in this paper, we then briefly discuss tropical cryptography as a key motivational application. Note that we use the standard notation $[m]=\{1,\cdots,m\}$ for most common index sets. We begin by presenting the standard definition of general semiring and tropical semiring as one of the most prominent examples.

\begin{definition}[Semiring]
A semiring is a non-empty set $S$ equipped with two binary operations $\oplus$ and $\otimes$, which satisfy the following properties:
\begin{itemize}
    \item $(S, \oplus)$ is a commutative monoid which means that it satisfies associativity, commutativity and existence of an additive identity element $\epsilon$.
    \item $(S, \otimes)$ is a monoid which means that it satisfies associativity and existence of multiplicative identity element $e$.
    \item In $(S, \oplus,\otimes)$ multiplication $\otimes$ distributes over addition $\oplus$.
    \item The additive identity $\epsilon$ satisfies the absorbing property, that is $\epsilon \otimes e=e \otimes \epsilon=\epsilon$.
\end{itemize}
\end{definition}

\begin{definition}[Tropical semiring]
The tropical semiring $\mathbb{R}_{\max }$ is defined by $\mathbb{R}_{\max }=(\mathbb{R} \cup\{-\infty\}, \oplus, \otimes),$ where the tropical addition $\oplus$ and the tropical multiplication $\otimes$ are respectively defined by $a \oplus b=\max \{a, b\}$ and $a \otimes b=a+b$ for all $a, b \in \mathbb{R}_{\max }$. Similarly, the tropical semiring over integers $\mathbb{Z}_{\max }$ is defined by $\mathbb{Z}_{\max }=(\mathbb{Z} \cup\{-\infty\}, \oplus, \otimes)$.
\end{definition}

The semiring operations can be extended to vectors and matrices to form matrix algebra over a general semiring $S$. In particular, the operation $A \otimes \alpha=\alpha \otimes A$, where $\alpha \in S, A \in S^{m \times n}$ and $A_{ij}$ denotes the $(i,j)$-entry of $A$ for $i\in [m]$ and $j\in [n]$, is defined by
$$
(A \otimes \alpha)_{i j}=(\alpha \otimes A)_{i j}=\alpha \otimes A_{i j} \quad \forall i \in [m]\text { and } \forall j \in [n].
$$
The addition $A \oplus B$ of two matrices $A \in S^{m \times n}$ and $B \in S^{m \times n}$, where $A_{i j}$ and $B_{i j}$ denote the $(i,j)$-entries of $A$ and $B$ for $i\in [m]$ and $j\in [n]$, is defined by
$$
(A \oplus B)_{i j}=A_{i j} \oplus B_{i j} \quad \forall i \in [m]\text { and } \forall j \in [n].
$$
The multiplication of two matrices is also similar to the ``traditional'' algebra. Namely, we define $A \otimes B$ for two matrices, where $A \in S^{m \times p}$ and $B \in S^{p \times n}$, as follows:
$$
(A \otimes B)_{i j}=\bigoplus_{k=1}^p A_{i k} \otimes B_{k j}
\quad \forall i \in [m]\text { and } \forall j \in [n].
$$

\begin{definition}[Matrix powers]
\label{powers def}
 For $A \in S^{n \times n}$, the $n$-th power of $A$ is denoted by $A^{\otimes n}$, and is equal to
$$
A^{\otimes n}=\underbrace{A \otimes A \otimes \cdots \otimes A}_{n \text { times }}
.$$
\end{definition}
By definition, power $0$ of any square matrix equals the identity matrix.
\begin{definition} [Identity matrix]
 The identity matrix $I \in S^{n \times n}$ is of the form $(I)_{i j}=\delta_{i j}$ where
$$
\delta_{i j}= \begin{cases} e  & \text { if } i=j, \\ \epsilon & \text { otherwise. }\end{cases}
$$
\end{definition}

We subsequently define the matrix polynomials over the symmetrized tropical semiring.
\begin{definition} [Matrix polynomials]
Matrix polynomial is a function of the form
$$
A \mapsto p(A)=\bigoplus_{k=0}^d a_k \otimes A^{\otimes k},
$$
where $A \in S^{n \times n}$, and $a_k \in S$.
\end{definition}

In this paper we will consider two layered extensions of tropical semiring, for which we give the following definition adapted from \cite{AkianGaubertGuterman2014}, Proposition-Definition 2.12.

\begin{definition}[Layered tropical semiring \cite{AkianGaubertGuterman2014}] 
Let $T$ be a semiring with zero $\szero$ and $\mathbb{R}_{\max}$ be the tropical semiring. Then, extension of $\mathbb{R}_{\max}$ by $T$ is the set
\begin{equation*}
T\ltimes\mathbb{R}_{\max }=(T\backslash\{\szero\}\times\mathbb{R}_{\max }\backslash\{-\infty\})\cup\{(\szero,-\infty)\},
\end{equation*}
equipped with the operations
\begin{equation*}
(a,b)\oplus (a',b')=
\begin{cases}
 (a+a', b) & \text{if $b=b'$},\\
 (a,b) & \text{if $b>b'$},\\
 (a',b') & \text {if $b'>b$}.
\end{cases}\quad \text{and}\quad (a,b)\otimes (a',b')=(a\cdot a', b\otimes b')
\end{equation*}
where $(a,b), (a',b')\in T\ltimes\mathbb{R}_{\max}$,  $(+,\cdot)$ denote the arithmetical 
operations in $T$ and $(\oplus,\otimes)$ denote the arithmetical operations in $\mathbb{R}_{\max}$. 
\end{definition}

\begin{definition}[Modulus (absolute value) in a layered tropical semiring  \cite{AkianGaubertGuterman2014}]

The modulus is the projection map
$$
|\cdot| : T \ltimes \mathbb{R}_{\max} \to \mathbb{R}_{\max}, \quad |(a,b)| = b.
$$
\end{definition}
The modulus is extended componentwise to vectors and matrices as follows. For a vector
$x=(x_1,\cdots,x_n)^{\mathsf T}\in (T \ltimes \mathbb R_{\max})^n$ , where ${}^{\mathsf T}$ denotes vector transposition, and a matrix
$A \in (T \ltimes \mathbb R_{\max})^{m\times n}$ define
$$
|x| = (\,|x_1|,\cdots,|x_n|\,)^{\mathsf T} \in\mathbb R_{\max}^n,\qquad
|A| = (\,|A_{ij}|\,)_{i,j}\in\mathbb R_{\max}^{m\times n}.
$$
For $u,v \in \mathbb{R}_{\max}^n$, we write $u \le v$ if
$u_i \le v_i$ for all $i$. For vectors $x,y \in (T \ltimes \mathbb{R}_{\max})^n$ we then use the notation $|x| \le |y|$ to mean the componentwise inequality between their moduli. In what follows we will compare various solutions of $A\otimes x=b$ and show how to find what we call the greatest modulus solution and the minimal modulus solutions in the case of symmetrized and supertropical semirings (introduced below). We will now give the rigorous definitions of greatest modulus solution and minimal modulus solution. We will then introduce the symmetrized and supertropical semirings following \cite{AkianGaubertGuterman2014}.

\begin{definition}[Greatest modulus solution]
A solution $y$ to $A\otimes x=b$ over the layered tropical semiring is called the greatest modulus solution if the following holds: for any solution $x$ to $A\otimes x=b$ we have  $|y_j| \geq |x_j|$ for all $j \in [n]$. 
\end{definition}

\begin{definition}[Minimal modulus solution]
Let $A \otimes x = b$ be a system over a layered tropical semiring, and let $\mathcal{S}$ denote the set of its solutions. A solution $d \in \mathcal{S}$ is called a \emph{minimal modulus solution} (minimal with respect to absolute value) if there is no other solution $x \in \mathcal{S}$ such that $|x| \le |d|$ and $|x| \neq |d|$. Equivalently, $d$ is minimal with respect to the partial order on $\mathcal{S}$ induced by the componentwise order on the modulus vectors.
\end{definition}

The first layered extension which we consider is called the symmetrized semiring and the second extension is the supertropical semiring. Both semirings were initially defined without using the layered semiring concept, but this concept provides a convenient common ground for both of them.  

\begin{definition}[Four-element symmetrized Boolean semiring \cite{AkianGaubertGuterman2014}]
Let $\mathbb{B}_s$ be the set
$$
\mathbb{B}_s=\{\varepsilon,\;0,\;\ominus 0,\;0^{\bullet}\}.
$$
Define the binary operations $\oplus$ (addition) and $\otimes$ (multiplication) on $\mathbb{B}_s$ by the tables below.
$$
\small
\begin{array}{c|cccc}
\oplus & \varepsilon & 0 & \ominus 0 & 0^{\bullet} \\ \hline
\varepsilon & \varepsilon & 0 & \ominus 0 & 0^{\bullet}\\
0 & 0 & 0 & 0^{\bullet} & 0^{\bullet}\\
\ominus 0 & \ominus 0 & 0^{\bullet} & \ominus 0 & 0^{\bullet}\\
0^{\bullet} & 0^{\bullet} & 0^{\bullet} & 0^{\bullet} & 0^{\bullet}
\end{array}
\qquad
\begin{array}{c|cccc}
\otimes & \varepsilon & 0 & \ominus 0 & 0^{\bullet} \\ \hline
\varepsilon & \varepsilon & \varepsilon & \varepsilon & \varepsilon\\
0 & \varepsilon & 0 & \ominus 0 & 0^{\bullet}\\
\ominus 0 & \varepsilon & \ominus 0 & 0 & 0^{\bullet}\\
0^{\bullet} & \varepsilon & 0^{\bullet} & 0^{\bullet} & 0^{\bullet}
\end{array}
\normalsize
$$
Then $(\mathbb{B}_s,\oplus,\otimes)$ is an idempotent semiring with additive identity $\varepsilon$ and multiplicative identity $0$.
\end{definition}

\begin{definition}[Symmetrized semiring \cite{AkianGaubertGuterman2014}]
The symmetrized semiring is the semiring $\mathbb{B}_s\ltimes\mathbb{R}_{\max}$. Excluding its zero element it is naturally split in three  parts of the form $\{0\}\times\mathbb{R}$, $\{\ominus 0\}\times \mathbb{R}$ and $\{0^{\bullet}\}\times\mathbb{R}$. The elements of $\{0\}\times\mathbb{R}$ and $\{\ominus 0\}\times \mathbb{R}$  are called signed and we will denote $a:=(0, a)$ and $\ominus a:= (\ominus 0, a)$ for any $a\in\mathbb{R}$. The elements of 
$\{0^{\bullet}\}\times\mathbb{R}$ are called balanced and we will denote $a^{\bullet}=(0^{\bullet},a)$ for $a\in\mathbb{R}$.   
\end{definition}

\begin{definition}[Supertropical semiring \cite{AkianGaubertGuterman2014}]
The Supertropical Semiring is the extension $\mathbb{N}_2\ltimes\mathbb{R}_{\max}$ where $\mathbb{N}_2$ denotes the quotient of the semiring $\mathbb{N}\cup \{0\}$ by the equivalence relation for which the equivalence classes are $\{0\}$, $\{1\}$ (further denoted by $\overline{1}$) and $\{n\in \mathbb{N}\colon n\geq 2\}$ (further denoted by $\overline{2}$).

The whole semiring $\mathbb{N}_2\ltimes\mathbb{R}_{\max}$ (excluding its zero element) is then naturally split in two parts of the form $\{\overline{1}\}\times\mathbb{R}$ and $\{\overline{2}\}\times \mathbb{R}$. The elements of 
$\{\overline{1}\}\times\mathbb{R}$ are called tangible and can be identified with the elements of $\mathbb{R}$ itself, while the elements of $\{\overline{2}\}\times\mathbb{R}$ are called ghosts and they will be distinguished by using a $\circ$ sign, i.e., $a^{\circ}:=(\overline{2},a)$ for $a\in\mathbb{R}$.  
\end{definition}

\if{
The tropical semiring can be extended to incorporate the concept of signs, leading to a broader structure. Specifically, the tropical semiring $\mathbb{R}_{\max}$ can be expanded to a larger set $\mathbb{S}$, where $\mathbb{R}_{\max}$ represents the positive part. This extension is analogous to the construction of the integers $\mathbb{Z}$ as an extension of the natural numbers $\mathbb{N}$ in classical algebra. In order to construct this extension, we firstly present the following definition.

\begin{definition}[Tropical algebra of pairs~\cite{Butkovi_book}]
    Let $\mathbb{R}_{\max }^2$ be the set of tropical pairs. Then, for $a,b \in \mathbb{R}_{\max }^2$, the two operations $(\oplus,\otimes)$ are defined as
\[
a \oplus b = (a_1, a_2) \oplus (b_1, b_2) = (a_1 \oplus b_1, a_2 \oplus b_2)
\]

\[
a \otimes b = (a_1, a_2) \otimes (b_1, b_2) = (a_1 \otimes b_1 \oplus a_2 \otimes b_2, a_1 \otimes b_2 \oplus a_2 \otimes b_1)
\]
we then define the minus sign as $\ominus a=(a_2,a_1)$, the absolute value of $a$ as $|a|=a_1 \oplus a_2$ and the balance operator as $a^{\bullet}=a \ominus a=(|a|,|a|)$. Clearly, these operators have the following properties:
\begin{enumerate}

\item $a^{\bullet}=(\ominus a)^{\bullet}$.
\item ${a^{\bullet}}^{\bullet}=a^{\bullet}$.
\item $a \otimes b^{\bullet}=(a \otimes b)^{\bullet}$.
\item $\ominus(\ominus a)=a$.
\item $\ominus(a \oplus b)=(\ominus a) \oplus(\ominus b)$.
\item $\ominus(a \otimes b)=(\ominus a) \otimes b$.

\end{enumerate}
These properties allow us to write $a \oplus(\ominus b)=a \ominus b$ as usual.
\end{definition}
We know present the equivalence relation on $\mathbb{R}_{\max}^2$
\[
(a_1, a_2) \mathcal{R} (b_1, b_2) \Leftrightarrow \begin{cases}
a_1 \oplus b_2 = a_2 \oplus b_1 & \text{if } a_1 \neq a_2, b_1 \neq b_2 \\
(a_1, a_2) = (b_1, b_2) & \text{otherwise.}
\end{cases}
\]

This equivalence relation is used to define symmetrized tropical algebra.
\begin{definition}[Symmetrized tropical semiring\label{def: sym semiring}~\cite{Butkovi_book}]
    The symmetrized tropica semiring $\mathbb{S}$ is the union of three equivalence classes $\mathbb{S}^\oplus, \mathbb{S}^\ominus$ and $\mathbb{S}^\bullet$ where
    $$
    \begin{aligned}
&\mathbb{S}^\oplus= \overline{(t,-\infty)} = \{(t, c) \mid c < t\}, \\
&\mathbb{S}^\ominus= \overline{(-\infty, t)} = \{(c, t) \mid c < t\}, \\
&\mathbb{S}^\bullet= \overline{(t, t)} = \{(t, t)\} \quad \forall t,c \in \mathbb{R}_{\max}
\end{aligned}
$$
Thus, each element in the symmetrized tropical semiring represents an equivalence class, and an easier notation using a single number will be used to denote it.
\end{definition}
}\fi

\if{

In general, the operations over the symmetrized tropical algebra can be summarized as follows
$$
\begin{aligned}
& a \ominus b=a, \quad \text { if } |a|>|b| , \\
& b \ominus a=\ominus a, \quad \text { if } |a|>|b| \text {, } \\
& a \ominus a=a^\bullet . \\
&
\end{aligned}
$$

$a \in \mathbb{R}_{\max}$ (e.g. $|\ominus -8|=8 \in \mathbb{R}_{\max}$).\\
}\fi

Let us now revisit the well-established theory regarding the solvability of tropical linear systems of equations of the shape
\begin{equation} \label{Ax=b}
    A \otimes x=b. 
\end{equation}
Below it can be assumed that $A \in \mathbb R_{\max}^{m \times n}, x \in \mathbb R_{\max}^{n}$ and $ b \in \mathbb R_{\max}^{m}$ or that 
$A \in \mathbb Z_{\max}^{m \times n}, x \in \mathbb Z_{\max}^{n}$ and $ b \in \mathbb Z_{\max}^{m}$ (as $\mathbb Z$ is closed under the tropical arithmetics). Throughout the paper we call the original layered linear system $A\otimes x = b$ the ``symmetrized" or ``supertropical" system (as appropriate), and we call the corresponding absolute value system $|A|\otimes z = |b|$ the ``tropical" system.

Below we will always assume that all components of $b$ are finite and there is a finite entry in each column of $A$. This is a reasonable assumption due to the following remark. 

\begin{remark}
The element $-\infty$ has the property ``$a\oplus b=-\infty \Leftrightarrow a=b=-\infty$'', and also  ``$a\otimes b=-\infty\Leftrightarrow a=-\infty$ or $b=-\infty$''.  This implies that if some components of $b$ are not finite then $A\otimes x=b$ can be easily reduced to the case where all of them are finite. Indeed, $b_i=-\infty$ implies that $x_j=-\infty$ whenever $A_{ij}\neq -\infty$. This means that not only the $i$th row of the system but also every column $j$ corresponding to such components of $x$ can be deleted. Furthermore, if a column of $A$ does not have finite entries, then the corresponding component of $x$ can take arbitrary values and hence such column of $A$ can be also deleted.  The same algebraic properties are true and the same deletion process works for the systems $A\otimes x=b$ over the symmetrized semiring $\mathbb {B}_s\ltimes \mathbb {R}_{\max}$ and the supertropical semiring $\mathbb {N}_2\ltimes\mathbb {R}_{\max}$
meaning that we can assume the finiteness of $b$ and existence of finite entry in every column of $A$ also when solving $A\otimes x=b$ over these semirings.
\end{remark}

The theory of $A\otimes x=b$ over various extensions of the tropical semiring naturally builds upon the theory of such systems over the tropical semiring itself. Firstly, we define the vector $\bar{x}=\left(\bar{x}_1, \cdots, \bar{x}_n\right)^{\mathsf T}$ where
\begin{equation}
\label{e:barx}
\bar{x}_j=-\left(\max _i\left(A_{i j}-b_i\right)\right) \quad \forall j \in [n].
\end{equation}

We then define the set $S_j(x)$ for each component $x_j$ of the vector $x$ as the set of indices of the satisfied equations in the system by the $j$'th component, formally the rows $i$ such that $x_j \otimes A_{ij}=b_i$ holds. That is
$$
S_j(x)=\left\{i \in [m]: x_j \otimes A_{ij} =b_i\right\} \quad \forall j \in [n].
$$
Note that $S_j(\bar x)$ is non-empty for every $j$ and it consists of the indices for which the maximum in~\eqref{e:barx} is attained.
\begin{definition}[Set cover and minimal set cover]
    Let $[m]$ be a finite set, and let $S_1, \cdots, S_n$ be subsets of $[m]$. We say that $S_1, \cdots, S_n$ form a \textit{cover} of $[m]$ if $\bigcup_{j \in [n]} S_j = [m]$. Furthermore, $S_1, \cdots, S_n$ form a \textit{minimal cover} if $\bigcup_{j \in [n]} S_j = [m]$ and, for any $k \in [n]$, $\bigcup_{j \in [n],j \neq k} S_j \neq [m]$, meaning that removing any subset $S_k$ from the cover would result in a union that no longer covers $[m]$.
\end{definition}

We now present how these definitions can be applied to describe the solvability and the complete solution set of the tropical linear system. 
As shown by Di Nola et al.~\cite{DiNola87}, the following proposition applies to general max-$T$ semirings with continuous $T$-norms. Although this result is well-known also in the case of max-plus semiring, we include a short proof for the readers' convenience. 

\begin{proposition}
If system~\eqref{Ax=b} is solvable, then it has a finite set of minimal solutions and just one maximal solution, which is the greatest solution $\bar{x}$. With the number of minimal solutions denoted by $r$, the whole solution set $H$ is represented as 
$$H=\bigcup_{i=1}^r \{x\colon d^{(i)}\leq x\leq \bar x\},$$
where $d^{(i)}$ denotes the $ith$ minimal solution.
\end{proposition}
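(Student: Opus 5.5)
The plan is to first characterize the solution set of~\eqref{Ax=b} through the vector $\bar x$ from~\eqref{e:barx} and the sets $S_j(x)$, and then read off the minimal solutions from minimal covers. Under the standing assumptions ($b$ finite, every column of $A$ has a finite entry), $\bar x$ is finite. The first step is the observation that $A\otimes x\le b$ holds if and only if $x\le\bar x$. Indeed, $A_{ij}+x_j\le b_i$ for all $i$ is equivalent to $x_j\le \min_i (b_i-A_{ij}) = \bar x_j$. Consequently, any solution $x$ satisfies $x\le \bar x$.

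The second step is the standard characterization: a vector $x\le\bar x$ solves the system if and only if $\bigcup_j S_j(x)=[m]$. For each row $i$ we already have $\max_j (A_{ij}+x_j)\le b_i$, so equality in row $i$ means some $j$ attains $b_i$, that is, $i\in S_j(x)$. Moreover, for $x\le\bar x$ and $x_j$ finite, we have $i\in S_j(x)$ only if $x_j=\bar x_j$ and $i\in S_j(\bar x)$, since $A_{ij}+x_j=b_i\ge A_{ij}+\bar x_j$. It follows that if the system is solvable, then $\bar x$ itself is a solution, because $S_j(x)\subseteq S_j(\bar x)$. Hence $\bar x$ is the greatest solution, and it is the unique maximal one.

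Third, every solution $x$ has the support set $J(x)=\{j: x_j=\bar x_j\}$, and the sets $S_j(\bar x)$, $j\in J(x)$, cover $[m]$. Conversely, for any $J\subseteq[n]$ such that $\{S_j(\bar x)\}_{j\in J}$ covers $[m]$, define $d^J$ by $d^J_j=\bar x_j$ for $j\in J$ and $d^J_j=-\infty$ otherwise. Then $d^J$ is a solution. Moreover, any $y$ with $d^J\le y\le\bar x$ is a solution, since $S_j(y)\supseteq S_j(d^J)$ and $y\le\bar x$. Every solution $x$ satisfies $x\ge d^{J(x)}$, and if $J(x)$ contains a minimal cover $J'$, then $x\ge d^{J'}$.

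Finally, I would show that the minimal solutions are exactly the $d^J$ with $J$ indexing a minimal cover. Any solution dominates some such $d^J$. If $d\le d^J$ is a solution, then $J(d)\subseteq J$ is a cover, so $J(d)=J$ by minimality, and then $d=d^J$. There are finitely many subsets $J$, so $r$ is finite and $H=\bigcup_i\{x: d^{(i)}\le x\le\bar x\}$. The only delicate point, which is minor, is handling $-\infty$ components correctly in the claim $S_j(x)\subseteq S_j(\bar x)$. This is immediate because $b$ is finite, so a component $x_j=-\infty$ gives $S_j(x)=\emptyset$.
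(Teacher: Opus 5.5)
Your argument is correct, but it is organized differently from the paper's proof. The paper does not use covers inside this proof. It cites Butkovi\v{c} for $x\le\bar x$. It then shows that a minimal solution can only have components $\bar x_j$ or $-\infty$, because a finite $x_j<\bar x_j$ produces only terms strictly below $b_i$ and can be switched off. It obtains a minimal solution below any given solution by repeatedly switching off strictly decreased components. Finally, it proves that every $x$ with $d^{(i)}\le x\le\bar x$ is a solution purely by monotonicity, $b=A\otimes d^{(i)}\le A\otimes x\le A\otimes\bar x=b$.

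You instead prove the residuation property $A\otimes x\le b\Leftrightarrow x\le\bar x$ directly and make the covering criterion your key lemma, from which everything follows combinatorially. Your route buys a sharper conclusion: the minimal solutions are exactly the vectors $d^J$ with $J$ a minimal cover by the sets $S_j(\bar x)$. The paper's proof only shows that minimal solutions have the form $d^J$ for some $J$, and leaves the exact correspondence to the citation of Markovskii after the proof. Your proof also establishes the solvability criterion ($\bar x$ is a solution iff the system is solvable) rather than quoting it.

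The paper's route buys brevity and portability. Its descent and sandwich steps use only monotonicity and the switching-off observation, and the descent step is reused almost verbatim later for layered semirings (a minimal modulus solution below any solution). Your cover characterization is instead the viewpoint behind the covering algorithms developed later in the paper. Your handling of $-\infty$ entries and of the finiteness of $\bar x$ is adequate; you need that finiteness for $J(d)\subseteq J$ and for $d\le d^J$ forcing $d=d^J$.
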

\begin{proof} The fact that $x\leq \bar{x}$ for any solution $x$ can be found, e.g., in~\cite{Butkovi_book}, Theorem 3.1.1.

If $x$ is a minimal solution, then its components are either $\bar{x}_j$ or $-\infty$: indeed, if $x_j$ is finite and $x_j<\bar{x}_j$, then setting $x_j=-\infty$ does not violate any equation of $A\otimes x=b$ but contradicts the minimality of $x$. This also implies that the set of minimal solutions is finite. 

If $x$ is not a minimal solution, then there exists a solution $x'$ such that $x'\neq x$ and $x'\leq x$. For each component $x'_j$ where $x'_j<x_j$ we can assume without loss of generality that $x'_j=-\infty$ since we have $A_{ij}\otimes x'_j<b_i$ for all such $j$ and all $i$.  If $x'$ is not a minimal solution then we can apply the same argument repeatedly, until we obtain a minimal solution after a finite number of steps. 

Finally, let $x$ satisfy $d^{(i)}\leq x\leq\bar{x}$ for some minimal solution $d^{(i)}$. Then we have $A\otimes d^{(i)}\leq A\otimes x\leq A\otimes \bar{x}$ and $A\otimes d^{(i)}=A\otimes\bar{x}=b$ implying that also $A\otimes x=b$.
\end{proof}

Note that the system $A\otimes x=b$ is solvable if and only if $\bar{x}$ is a solution, which is equivalent to the sets $S_j(\bar{x})$ (for $j \in [n]$) covering the entire index set $[m]$ (e.g., \cite{Butkovi_book} Corollary 3.1.2). Furthermore, a vector $x$ solves the system if and only if $x \leq \bar{x}$ and the corresponding sets $S_j(x)$ (for $j \in [n]$) also cover $[m]$ (e.g. \cite{Butkovi_book}, Theorem 3.1.1). The minimal solutions arise naturally from the minimal subsets $K \subseteq [n]$ such that $\bigcup_{j \in K} S_j(\bar{x}) = [m]$: for each such $K$, the vector with $x_j = \bar{x}_j$ for $j \in K$ and $x_j = -\infty$ otherwise is a minimal solution of the system (e.g., \cite{Markovskii2004}, Corollary 1).

Therefore, the task of finding the complete solution set involves enumerating all possible minimal ``reductions'' of $\bar{x}$ where only the components of $\bar{x}$ that are necessary to provide for a cover of all rows are left finite and all other components are set to $-\infty$. For this we need to enumerate all minimal covers of $[m]$ by $S_j(\bar x)$ for all $j \in [n]$.

It is worth mentioning that the minimum set cover problem (finding a cover of smallest cardinality) is NP-hard~\cite{karp1972reducibility}, whereas finding one inclusion-minimal cover can be done in polynomial time by starting with any valid cover and iteratively removing redundant sets. The problem of enumerating all inclusion-minimal covers, which is mostly relevant here, is equivalent to the hypergraph transversal enumeration problem, for which quasi-polynomial time algorithms may exist~\cite{Blasius}, though the number of such covers can be exponential in the input size, making it computationally challenging in practice.\\

One of the well-known ideas in algebraic cryptography is that two parties, commonly called Alice and Bob, can exchange certain mathematical information and, working on this information, further create two secret keys, which ``magically'' coincide due to some commutativity properties that come from the mathematics being used by these parties. Then this common key can be used by the parties to encrypt and decrypt the messages being sent. Following this idea, Grigoriev and Shpilrain~\cite{GrigorievShpilrainStickel} observed that the Stickel protocol~\cite{Stickel}, formerly implemented using the ``traditional'' matrix algebra over rings and fields, also works over the tropical semiring where it is protected against the linear algebra attacks since almost all matrices in the tropical algebra are non-invertible. It can be also easily observed that the same protocol works over any semiring, as any two polynomials of the same square matrix commute over any semiring.

Despite this, an eavesdropper with access to the public data and the exchanged messages can reconstruct the shared secret key. Kotov and Ushakov~\cite{KU_paper} proposed an attack that works by enumerating the whole solution set of a one-sided tropical linear system derived from the publicly transmitted messages and identifying a special solution that enables recovery of the shared secret key.

A later observation in~\cite{Spanish_Paper} significantly strengthens this attack. It shows that the attacker does not need to recover a particular solution to the derived system. Instead, any solution is sufficient to recover the shared secret key.

This attack makes the tropical Stickel protocol very insecure and motivates the search for 1) other implementations of this protocol based on some commuting classes of matrices over the tropical semiring (and other semirings), 2) semirings over which the system $A\otimes x=b$ is not too easy to solve. As for 2), one natural idea is to consider various kinds of layered tropical semirings such as the symmetrized semiring and the supertropical semiring, and this has been one of the motivations for \cite{PonmaheshkumarMultiparty25} where a variant of the supertropical semiring was used as a platform for a multi-party extension of the Stickel protocol presented above. The application of the approach developed in the present paper to cryptanalyse \cite{PonmaheshkumarMultiparty25} is discussed in \cite{AS7}.\\

It is not clear, however, how a solution to $A\otimes x=b$ can be found over a general layered tropical semiring. An immediate idea is that we can first define the sets 
$$
S_j(\bar{z})=\left\{i \in [m]: \bar{z}_j \otimes |A_{ij}| =|b_i|\right\} \quad \forall j \in [n],
$$
where $\bar{z}$ is the greatest solution of the tropical system $|A|\otimes z=|b|$ and then define 
 $A'$ by 
\begin{equation}
\label{e:A'}
A'_{ij}=
\begin{cases}
    A_{ij}, &\text{if $i\in S_j(\bar{z})$},\\
    \szero, &\text{otherwise},
\end{cases}
\end{equation}
where $\szero$ denotes the zero of the semiring. Then we have the following obvious observation. (Throughout this paper, a system is called "solvable" if it has at least one solution over the underlying semiring.)
\begin{proposition}
$A\otimes x=b$ is solvable if and only if $A'\otimes x=b$ is solvable.    
\end{proposition}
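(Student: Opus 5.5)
The plan is to prove the stronger statement that $A\otimes x=b$ and $A'\otimes x=b$ have exactly the same solution set over the layered semiring. The tool is the modulus map $|\cdot|$, which is a semiring homomorphism onto $\mathbb{R}_{\max}$: from the definitions, $|(a,b)\oplus(a',b')|=b\oplus b'$ and $|(a,b)\otimes(a',b')|=b\otimes b'$. The latter uses the fact that $T\ltimes\mathbb{R}_{\max}$ only contains pairs with nonzero first coordinate, or the zero. This needs $a\cdot a'\neq\szero$ for nonzero $a,a'$, which holds in $\mathbb{B}_s$ and $\mathbb{N}_2$. Hence any solution $x$ of $A\otimes x=b$ gives a solution $|x|$ of the tropical system $|A|\otimes z=|b|$. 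By the Proposition above (greatest solution), this yields $|x|\le\bar z$.

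Now fix a row $i$ and compare the two sums $\bigoplus_j A_{ij}\otimes x_j$ and $\bigoplus_j A'_{ij}\otimes x_j$. They differ only in terms with $i\notin S_j(\bar z)$, where $A_{ij}$ is replaced by $\szero$. For such a term we claim $|A_{ij}\otimes x_j|<|b_i|$. Indeed, $|A_{ij}|+|x_j|\le|A_{ij}|+\bar z_j\le |b_i|$, using $\bar z_j\le |b_i|-|A_{ij}|$ from the formula for $\bar z$. Equality in the second inequality would mean $i\in S_j(\bar z)$, which is excluded. Equality in the first is irrelevant, since the second is already strict whenever $|A_{ij}|$ is finite; if $A_{ij}=\szero$, the term is already zero.

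The key layered fact is that adding an element of strictly smaller modulus than the current maximum does not change a sum. From the definition of $\oplus$, $(a,b)\oplus(a',b')=(a,b)$ if $b>b'$. Since the $i$-th sum for $A$ equals $b_i$ and has modulus $|b_i|$, the terms of modulus $|b_i|$ (all coming from $A'$) determine it. Grouping the terms of $A'$ and the extra terms separately gives $\bigoplus_j A_{ij}\otimes x_j = \big(\bigoplus_j A'_{ij}\otimes x_j\big)\oplus c$, where $|c|<|b_i|$. The modulus of the first bracket must therefore be $|b_i|$, and $c$ is absorbed. So the $A'$ sum equals $b_i$.

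For the converse, take a solution $x$ of $A'\otimes x=b$. Here $|x|\le\bar z$ needs a separate check, since $|A'|\le|A|$ gives only the tropical system for $|A'|$. I expect this to be the main obstacle. The fix is to apply the converse argument to the particular vector $x'$ obtained from $x$ by setting to $\szero$ all components $j$ with $|x_j|>\bar z_j$. For such $j$ and every $i\in S_j(\bar z)$ we would get $|A'_{ij}\otimes x_j|>|b_i|$, contradicting $|A'\otimes x|=|b|$. Hence every column $j$ with $|x_j|>\bar z_j$ is entirely zero in $A'$, so $x'$ still solves $A'\otimes x=b$. Now $|x'|\le\bar z$, and the absorption argument above, run in reverse, shows that $A\otimes x'=b$. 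This establishes equivalence of solvability, and in fact equality of the solution sets among vectors with $|x|\le\bar z$.
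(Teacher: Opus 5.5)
Your proof is correct and follows the same idea as the paper's one-line argument: for a solution of either system, every term $A_{ij}\otimes x_j$ with $i\notin S_j(\bar z)$ has modulus strictly below $|b_i|$, so it is absorbed and switching it off or on changes nothing. The zeroing step in your converse is vacuous under the paper's standing assumptions ($b$ finite, each column of $A$ with a finite entry): these give $S_j(\bar z)\neq\emptyset$ for every $j$, so your own contradiction argument already shows $|x|\le\bar z$ for every solution of $A'\otimes x=b$, and the two solution sets in fact coincide.
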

\begin{proof}
If $x$ is a solution of $A\otimes x=b$ or $A'\otimes x=b$ then we have $|A_{ij}|\otimes |x_j|< |b_i|$ for any $i\notin S_j(\bar{z})$, so switching off such $A_{ij}$ to $\szero$ or keeping its finite value will not violate any of the inequalities of any of these systems.
\end{proof}

However, the case of supertropical semiring considered below shows that further reduction of $A'\otimes x=b$ to a one-sided linear system over the semiring $T$ may be not straightforward.



  


\section{Solving tropical linear system over symmetrized and supertropical semirings}

We firstly present some results on the greatest modulus solutions of $A\otimes x=b$, and then present algorithms that find minimal modulus solutions of this system.

\subsection{Finding the greatest modulus solution to $A \otimes x=b$}

In this section we discuss the algorithms for finding a solution of greatest modulus of one-sided system $A\otimes x=b$. In the case of symmetrized semiring such solution exists if and only if $A\otimes x=b$ is solvable, and in the case of supertropical semiring such solution exists only under certain conditions. 

The upcoming theories and discussions will be always based on $\mathbb{Z}_{\max}$ and its symmetrized and supertropical extensions. It is also feasible to introduce analogous concepts in the general case (i.e. over $\mathbb{R}_{\max}$), although some results require modification due to the lack of discreteness.

In this paper, we focus on the discrete semiring $\mathbb{Z}_{\max}$ because our motivating applications are in cryptography which most naturally works with integers or rational numbers. All the introduced concepts extend to $\mathbb{R}_{\max}$, but statements that exploit discreteness (for example, replacing $\bar x_j$ by its immediate predecessor $\bar x_j-1$, or equivalently considering the set $(-\infty,\bar x_j-1]$) must be reformulated. In $\mathbb{R}_{\max}$, where no immediate predecessor exists, such arguments and claims should be stated in terms of the open intervals $(-\infty,\bar x_j)$.\\

We now present Algorithm~\ref{alg: x_sym} for computing the candidate greatest solution to $A \otimes x=b$ over the symmetrized semiring and Algorithm~\ref{alg: x_sup} for computing the same in the supertropical case. We denote by sign($a$) the sign of the symmetrized element $a$, which can be positive or negative. Note that the symmetrized and supertropical semirings do not satisfy the conditions of Theorems 2 or 3 of~\cite{Spanish_Paper}, where a general method to find the greatest solution of a linear system over an additively idempotent semiring is given.
\begin{algorithm}[H]
\caption{Computing the candidate greatest modulus solution over symmetrized semiring \label{alg: x_sym}}
\begin{algorithmic}[1]
\State \textbf{Inputs:} The symmetrized system $A \otimes x=b$.
\State \textbf{Output:} The vector $\bar x_{\sym}$.
\vspace{0.5cm}
\State Let $ \text{Signed\_equations} = \{ i \in [m] : b_i \text{ is signed} \} $ and $ \text{Balanced\_equations} = \{ i \in [m] : b_i \text{ is balanced} \} $.
\vspace{0.5cm}
\State Find the tropical system $|A| \otimes z=|b|$ by taking the absolute value of the symmetrized system, and find the greatest solution $\bar x$ and the sets $S_j(\bar x)=\{i \in [m]: \bar x_j \otimes |A_{ij}|=|b_i|\}$ for all $j \in [n]$.
\vspace{0.5cm}
\State Find $\widetilde S_j=S_j(\bar x) \cap \text{Signed\_equations}$ for all $j \in [n]$.
\vspace{0.5cm}
\State For each $j \in [n]$, compute the components $\bar x_{\sym,j}$ of $\bar x_{\sym}$ as follows.
\vspace{0.5cm}
\If{$\widetilde S_j \neq \emptyset$}
    \If{$A_{ij}$ is not balanced, and $\operatorname{sign}(b_i) =  \operatorname{sign}(A_{ij})$ for all $i \in \widetilde S_j$}
        \State Set $\bar x_{\sym,j}=\bar x_j$.
    
    \ElsIf{$A_{ij}$ is not balanced, and $\operatorname{sign}(b_i) = \ominus \operatorname{sign}(A_{ij})$ for all $i \in \widetilde S_j$}
        \State Set $\bar x_{\sym,j}=\ominus \bar x_j$.
    \Else
        \State Set $\bar x_{\sym,j}=\bar x_j -1$. 
    \EndIf
\EndIf
\vspace{0.5cm}
\If{$\widetilde S_j = \emptyset$}
    \State Set $\bar x_{\sym,j}=\bar x_j^\bullet$.
\EndIf

\end{algorithmic}
\end{algorithm}

\begin{algorithm}[H]
\caption{Computing the candidate greatest modulus solution in the supertropical case \label{alg: x_sup}}
\begin{algorithmic}[1]
\State \textbf{Inputs:} The supertropical system $A \otimes x=b$.
\State \textbf{Output:} The candidate greatest solution  $\bar x_{\sup}$.
\vspace{0.5cm}
\State Let $ \text{Tangible\_equations} = \{ i \in [m] : b_i \text{ is tangible} \} $ and $ \text{Ghost\_equations} = \{ i \in [m] : b_i \text{ is ghost} \} $.
\vspace{0.5cm}
\State Find the tropical system $|A| \otimes z=|b|$ by taking the absolute value of the supertropical system, and find the greatest solution $\bar x$ and the sets $S_j(\bar x)=\{i \in [m]: \bar x_j \otimes |A_{ij}|=|b_i|\}$ for all $j \in [n]$.
\vspace{0.5cm}
\State Find $\widetilde S_j=S_j(\bar x) \cap \text{Tangible\_equations}$ for all $j \in [n]$.
\vspace{0.5cm}
\State For each $j \in [n]$, compute the components $\bar x_{\sup,j}$ of $\bar x_{\sup}$ as follows.
\vspace{0.5cm}
\If{$\widetilde S_j \neq \emptyset$}
    \If{$A_{ij}$ is not ghost for all $i \in \widetilde S_j$}
        \State Set $\bar x_{\sup,j}=\bar x_j$.
    \Else
        \State Set $\bar x_{\sup,j}=\bar x_j -1$.
    \EndIf
\EndIf
\vspace{0.5cm}
\If{$\widetilde S_j = \emptyset$}
    \State Set $\bar x_{\sup,j}=\bar x_j^\circ$.
\EndIf
\end{algorithmic}
\end{algorithm}

Algorithms~\ref{alg: x_sym} and~\ref{alg: x_sup} compute the candidate greatest modulus solutions for symmetrized and supertropical systems, respectively, in $O(mn)$ time, as they involve scanning the $m \times n$ matrix to compute the tropical greatest solution, identify equation types, form intersection sets of size up to $m$, and assign signs or ghost status via $O(m)$ checks per variable.

The following theorem states that the greatest modulus solution of the system $A\otimes x=b$ in the symmetrized case exists if and only if the system is solvable and that Algorithm~\ref{alg: x_sym} indeed provides such solution.

\begin{theorem}[Solvability and greatest solution of the symmetrized system]\label{theorem: sym greatest solution}
    The symmetrized system $A \otimes x=b$ is solvable if and only if $\bar x_{\sym}$ is the greatest modulus solution of this system. 
\end{theorem}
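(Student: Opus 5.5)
The ``if'' direction is immediate: if $\bar x_{\sym}$ is the greatest modulus solution, then in particular it is a solution, so the system is solvable. For the ``only if'' direction I will fix an arbitrary solution $x$ of $A\otimes x=b$. I will then prove two things: (a) $|x_j|\le |\bar x_{\sym,j}|$ for every $j\in[n]$, and (b) $\bar x_{\sym}$ is itself a solution. Together these give the claim.

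\emph{Preliminary facts.} The modulus map $|\cdot|$ is a semiring homomorphism $\mathbb{B}_s\ltimes\mathbb{Z}_{\max}\to\mathbb{Z}_{\max}$. Hence $|A|\otimes|x|=|b|$, and by the tropical theory (the Proposition above and the characterisation of $\bar x$) we get $|x|\le\bar x$. I will also use the following structural fact, which follows from the addition table of $\mathbb{B}_s$. A sum $\bigoplus_j A_{ij}\otimes x_j$ equals a signed $b_i$ if and only if (i) no term has modulus exceeding $|b_i|$, and (ii) the terms of modulus exactly $|b_i|$ are nonempty, all signed, and all of the same sign as $b_i$. The sum is balanced of modulus $|b_i|$ if and only if (i) holds and the terms at level $|b_i|$ are nonempty and contain either a balanced term or two terms of opposite sign. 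A key consequence: once the top-level terms form a balanced sum, adding further terms at that level keeps it balanced, because $0^\bullet$ is absorbing for $\oplus$ in $\mathbb{B}_s$.

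\emph{Step (a), the modulus bound and a rigidity statement.} I will go through the three branches of Algorithm~\ref{alg: x_sym} for each $j$.

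If $\widetilde S_j=\emptyset$, then $|\bar x_{\sym,j}|=\bar x_j\ge|x_j|$ and there is nothing to prove.

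Now suppose $\widetilde S_j\neq\emptyset$ and $|x_j|=\bar x_j$. Then for every $i\in\widetilde S_j$ the term $A_{ij}\otimes x_j$ sits at the top level $|b_i|$, and $b_i$ is signed. By the structural fact this term must be signed and equal in sign to $b_i$. Consequently:
\begin{itemize}
\item no $A_{ij}$ with $i\in\widetilde S_j$ is balanced, and $x_j$ is not balanced;
\item $\operatorname{sign}(x_j)=\operatorname{sign}(b_i)\operatorname{sign}(A_{ij})$ for all $i\in\widetilde S_j$, so this product is constant over $\widetilde S_j$.
\end{itemize}
So $j$ falls into the first or second branch, and moreover $x_j=\bar x_{\sym,j}$ exactly. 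Contrapositively, in the third branch we must have $|x_j|<\bar x_j$, and discreteness of $\mathbb{Z}$ gives $|x_j|\le\bar x_j-1=|\bar x_{\sym,j}|$. This proves (a), together with a rigidity statement that I will reuse: \emph{whenever $\widetilde S_j\ne\emptyset$ and $|x_j|=\bar x_j$, we have $x_j=\bar x_{\sym,j}$.}

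\emph{Step (b), $\bar x_{\sym}$ is a solution.} Since $|\bar x_{\sym}|\le\bar x$, no term $A_{ij}\otimes\bar x_{\sym,j}$ exceeds level $|b_i|$. The terms of $\bar x_{\sym}$ at level $|b_i|$ are exactly those with $i\in S_j(\bar x)$ and $j$ not in the third branch. I treat signed and balanced rows separately.

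For a signed $b_i$: the solution $x$ has some $j$ with $|A_{ij}|\otimes|x_j|=|b_i|$. Then $|x_j|=\bar x_j$ and $i\in\widetilde S_j$, so by rigidity $\bar x_{\sym,j}=x_j$, and this term is nonempty and signed like $b_i$. Every other top-level term of $\bar x_{\sym}$ in row $i$ comes from some $j'$ with $i\in\widetilde S_{j'}$ (hence $\widetilde S_{j'}\ne\emptyset$, so no balanced component appears). Such a $j'$ lies in the first or second branch, which by construction produce the sign of $b_i$. Hence the row evaluates to $b_i$.

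For a balanced $b_i$: the top-level terms of $x$ in row $i$ form a balanced sum. Consider each index $j$ contributing to it.
\begin{itemize}
\item If $\widetilde S_j=\emptyset$, then $\bar x_{\sym,j}$ is balanced, so $\bar x_{\sym}$ has a balanced top term in row $i$ and the row is balanced.
\item Otherwise, rigidity gives $\bar x_{\sym,j}=x_j$.
\end{itemize}
So either $\bar x_{\sym}$ already has a balanced top term, or its top-level terms contain all those of $x$, unchanged. In the latter case the sum is balanced by the absorbing property noted in the preliminary facts.

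The main obstacle, and the point that needs the most care, is the rigidity statement in step (a): that a top-modulus component of any solution must coincide with $\bar x_{\sym,j}$ exactly, sign included. I will argue it through the sign bookkeeping of the structural fact on signed rows. Everything else in the proof reduces to that statement.
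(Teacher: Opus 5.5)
Your proof is correct, and it rests on the same key observation as the paper's: if $\widetilde S_j\neq\emptyset$ and $|x_j|=\bar x_j$ for a solution $x$, the signed rows in $\widetilde S_j$ force $x_j=\bar x_{\sym,j}$ exactly. You differ in how you show that $\bar x_{\sym}$ is a solution.

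The paper works column by column. Starting from a solution $x$, it replaces each $x_j$ by $\bar x_{\sym,j}$ and checks, branch by branch and separately for $|x_j|=\bar x_j$ and $|x_j|<\bar x_j$, that the modified vector is still a solution. You instead verify $A\otimes\bar x_{\sym}=b$ row by row, comparing the top-level terms of $\bar x_{\sym}$ with those of $x$ and using your rigidity statement together with the absorbing property of $0^{\bullet}$.

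The paper's exchange argument gives a slightly stronger local fact as a byproduct: raising any single coordinate of any solution to its $\bar x_{\sym}$ value keeps it a solution. Your row-wise check avoids tracking intermediate vectors. It also makes explicit two points the paper leaves implicit in this proof. First, $|x_j|=\bar x_j$ is impossible in the third branch of Algorithm~\ref{alg: x_sym}. Second, by discreteness, $|x_j|\le\bar x_j-1=|\bar x_{\sym,j}|$ in that branch, whereas the paper only calls the modulus bound ``easy to see''.
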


\begin{proof}
    Let $x$ be any solution to the symmetrized system. This implies $|x|$ is a solution to the tropical system $|A| \otimes y=|b|$, which means $|x|$ corresponds to a cover $K$ of $[m]$ using $S_j(\bar x)=S_j=\{i \in [m]: \bar x_j \otimes |A_{ij}|=|b_i|\}$ for all $j \in [n]$. Here $\bar x$ is the greatest solution of the tropical system. This implies $|x_j|=\bar x_j$ for $j \in K$ and $|x_j|< \bar x_j$ for $j \notin K$.\\\\
We will now show that $\bar x_{\sym}$ is a solution by replacing, if necessary, the components of $x$ by the components of $\bar x_{\sym}$  and making sure that $A\otimes x=b$ still holds after such replacement. 
    For components $x_j$ where $|x_j|=\bar x_j$, we have the following cases:
    \begin{itemize}
        \item If $\widetilde S_j \neq \emptyset$ and sign($b_i$) $=$ sign($A_{ij}$) for all $i \in \widetilde S_j$, then $x_j$ must be equal to $\bar x_j=\bar x_{\sym,j}$ since $x$ is a solution. Note that if $x_j$ is $\ominus \bar x_j$ or $\bar x_j^\bullet$ the associated signed equations will no longer be satisfied.
        \item If $\widetilde S_j \neq \emptyset$ and sign($b_i$) $= \ominus$ sign($A_{ij}$) for all $i \in \widetilde S_j$, then $x_j$ must be equal to $\ominus \bar x_j=\bar x_{\sym,j}$ for the same rationale.
        \item  If $\widetilde S_j=\emptyset$, then $x_j$ can be replaced by $\bar x_j^\bullet=\bar x_{\sym,j}$ and $x$ is still a solution, because we have $|x_j| \otimes |A_{ij}|=|b_i|$ for all $i \in S_j$, which means $\bar x_j^\bullet \otimes A_{ij}=b_i$ still holds for all $i \in S_j$ since $b_i$ is balanced for all $i \in S_j$, and the remaining equations $i \notin S_j$ are still satisfied since $|x_j| \otimes |A_{ij}|<|b_i|$.
    \end{itemize}
    Then, for components $x_j$ where $|x_j|<\bar x_j$, we have the following cases:
    \begin{itemize}
        \item If $\widetilde S_j \neq \emptyset$ and sign($b_i$) $=$ sign($A_{ij}$) for all $i \in \widetilde S_j$, then $x_j$ can be replaced with $\bar x_j=\bar x_{\sym,j}$ since $x$ is a solution. This is because we have $|x_j| \otimes |A_{ij}|< |b_i|$ for all $i \in [m]$, and if we do the replacement, the tropical system is still satisfied since we will have $|x_j| \otimes |A_{ij}|=|b_i|$ for all $i \in S_j$ and $|x_j| \otimes |A_{ij}|<|b_i|$ for all $i \notin S_j$. Also, the symmetrized system remains satisfied, as the signed equations are still satisfied since we used $\bar x_j$ with the appropriate sign, and the balanced equations are also preserved as adding a signed or balanced element doesn't change the sign of a balanced left-hand side.
        \item If $\widetilde S_j \neq \emptyset$ and sign($b_i$) $= \ominus$ sign($A_{ij}$) for all $i \in \widetilde S_j$, then $x_j$ can be replaced with $\ominus \bar x_j=\bar x_{\sym,j}$ due to the same argument.
        \item If $\widetilde S_j = \emptyset$, then $x_j$ can be replaced with $\bar x_j^\bullet=\bar x_{\sym,j}$ and $x$ is still a solution. This is because we have $|x_j| \otimes |A_{ij}|< |b_i|$ for all $i \in [m]$, and if we do the replacement, the tropical system is still satisfied since we will have $|x_j| \otimes |A_{ij}|=|b_i|$ for all $i \in S_j$ and $|x_j| \otimes |A_{ij}|<|b_i|$ for all $i \notin S_j$. Also, the symmetrized system is still satisfied as this component doesn't affect any signed equation since $|x_j| \otimes |A_{ij}|<|b_i|$ for all signed equations, and $x_j \otimes A_{ij}=b_i$ still holds for balanced equations as $|x_j| \otimes |A_{ij}|=|b_i|$ and changing $x_j$ with $\bar x_j^\bullet$ do not affect the balanced left-hand side.
        \item if $\widetilde{S}_j \neq \emptyset$ and neither the case that sign$(b_i)$ $=$ sign$(A_{ij})$ holds for all $i\in\widetilde{S}_j$, nor the case that sign$(b_i)$ $=\ominus$ sign$(A_{ij})$ holds for all $i\in\widetilde{S}_j$, then we can replace $x_j$ with $\bar x_j-1$ and $x$ is still a solution. This is because we have $|x_j| \otimes |A_{ij}|< |b_i|$ for all $i \in [m]$, and if we do the replacement, we still have $|x_j| \otimes |A_{ij}|< |b_i|$ for all $i \in [m]$ since $|x_j|<\bar x_j$ which means all the already satisfied equations are not effected.
    \end{itemize}
    Note that in all cases the inequalities $|x_j|\leq |\bar{x}_{\sym}|$ are easy to see. 
\end{proof}

The next example illustrates how Algorithm~\ref{alg: x_sym}
is applied to find a greatest modulus solution of $A\otimes x=b$ in the symmetrized case. 

\begin{example} [Solvability and greatest solution of the symmetrized system\label{ex:Solvability of the symmetrized system}]

$$
\left(\begin{array}{ccc}
1 & 3 & \ominus 4 \\
0 & 3 & 4 \\
2 & 0 & \ominus 0
\end{array}\right) \otimes\left(\begin{array}{l}
x_1 \\
x_2 \\
x_3
\end{array}\right)=\left(\begin{array}{c}
0^\bullet \\
\ominus 0 \\
0^\bullet
\end{array}\right).
$$
To determine whether the system is solvable, we firstly compute $\bar x_{\sym}$ using Algorithm~\ref{alg: x_sym} and then verify if $\bar x_{\sym}$ satisfies the system.
The algorithm produces $\bar x_{\sym}=(-2^\bullet ,\ominus-3, \ominus-4)^{\mathsf T}$, indicating that the system is solvable as $\bar x_{\sym}$ is a solution.

More precisely, the tropical greatest solution is $\bar x_j=-\max_i(|A_{ij}|-|b_i|)=-\max_i|A_{ij}|$, hence $\bar x=(-2,-3,-4)^{T}$. The sets $S_j(\bar{x})=\{\,i: |A_{ij}|+\bar x_j=0\,\}$ are $S_1(\bar{x})=\{3\}$, $S_2(\bar{x})=\{1,2\}$ and $S_3(\bar{x})=\{1,2\}$. Since only $b_2=\ominus0$ is signed, we have $\widetilde S_1=\emptyset$, $\widetilde S_2=\{2\}$ and $\widetilde S_3=\{2\}$. By Algorithm~\ref{alg: x_sym} we therefore set $\bar x_{\sym,1}=-2^{\bullet}$ and choose the signs of $\bar x_{\sym,2},\bar x_{\sym,3}$ so that row $2$ is respected, yielding $\bar x_{\sym}=(-2^{\bullet},\;\ominus-3,\;\ominus-4\bigr)^{\mathsf T}$.

\end{example}

For the case of the supertropical semiring, a solvable one-sided linear system has no greatest modulus solution in general, as the next example shows. 

\begin{example} [Solvability and non-existence of greatest modulus solution in the supertropical case]

$$
\left(\begin{array}{ccc}
2 & 3 &  4 \\
0 & 3 & 4 \\
2 & 0 & 0
\end{array}\right) \otimes\left(\begin{array}{l}
x_1 \\
x_2 \\
x_3
\end{array}\right)=\left(\begin{array}{c}
0^\circ \\
 0 \\
0^\circ
\end{array}\right).
$$
The system is solvable since $(-2^\circ, -4, -4)^{\mathsf T}$ is a valid solution. However, an attempt to generate a greatest solution using Algorithm~\ref{alg: x_sup} gives $\bar x_{\sup}=(-2^\circ, -3, -4)^{\mathsf T}$, which fails to satisfy the second equation because it would instead evaluate to $0^\circ$.

In more detail, Algorithm~\ref{alg: x_sup} first determines the tropical greatest solution by $\bar x_j=-\max_i(|A_{ij}|-|b_i|)=-\max_i|A_{ij}|$, which gives $\bar x_1=-\max(2,0,2)=-2$, $\bar x_2=-\max(3,3,0)=-3$ and $\bar x_3=-\max(4,4,0)=-4$. The sets $S_j(\bar x)=\{\,i: |A_{ij}|+\bar x_j=0\,\}$ are $S_1(\bar x)=\{1,3\}$, $S_2(\bar x)=\{1,2\}$ and $S_3(\bar x)=\{1,2\}$. Since the right-hand side has $b_2=0$ (tangible) while $b_1$ and $b_3$ are ghosts, Algorithm~\ref{alg: x_sup} assigns $\bar x_{\sup,1}=-2^{\circ}$ and keeps $\bar x_{\sup,2}=-3$ and $\bar x_{\sup,3}=-4$ tangible, yielding the candidate $\bar x_{\sup}=(-2^{\circ},-3,-4)^{\mathsf T}$. However, this vector does not satisfy the second equation: indeed, $3\otimes \bar x_2=3+(-3)=0$ and $4\otimes \bar x_3=4+(-4)=0$, so the maximum $0$ is attained twice by tangible terms and their sum is therefore $0^{\circ}$, whereas the right-hand side of the second equation is $0$.
\end{example}

However, we can modify this example in some ways to get a different situation in which a greatest modulus solution exists. 

\begin{example} [Existence of greatest modulus solution in the supertropical case]
The most obvious modification is to turn all components of the right hand side into ghosts: 
$$
\left(\begin{array}{ccc}
2 & 3 &  4 \\
0 & 3 & 4 \\
2 & 0 & 0
\end{array}\right) \otimes\left(\begin{array}{l}
x_1 \\
x_2 \\
x_3
\end{array}\right)=\left(\begin{array}{c}
0^\circ \\
 0^{\circ} \\
0^\circ
\end{array}\right).
$$
The candidate greatest modulus solution computed by Algorithm~\ref{alg: x_sup} is $(-2^\circ, -3^{\circ}, -4^{\circ})^{\mathsf T}$ and it satisfies the system. Yet another modification is 
$$
\left(\begin{array}{ccc}
2 & 3 &  4 \\
0 & 3 & 0 \\
2 & 0 & 0
\end{array}\right) \otimes\left(\begin{array}{l}
x_1 \\
x_2 \\
x_3
\end{array}\right)=\left(\begin{array}{c}
0^\circ \\
 0 \\
0^\circ
\end{array}\right).
$$
In this case  Algorithm~\ref{alg: x_sup} computes $(-2^\circ, -3, -4^{\circ})^{\mathsf T}$ and it also satisfies the system.
\end{example}
This raises a question about the conditions under which a greatest modulus solution to $A\otimes x=b$ exists in the supertropical case. Indeed, if there exist two distinct components $j$ and $j'$ such that $ \widetilde{S}_j \cap \widetilde{S}_{j'} \neq \emptyset$, then at least one tangible equation will be violated, and therefore, $ \bar x_{\sup} $ is not a solution. In fact, we need the following condition to be satisfied:
\begin{equation}
\label{e:tangible_condition}
\forall i \in \text{Tangible\_equations},\quad \left| \left\{ j \in [n] : i \in \widetilde{S}_j, \bar{x}_{\sup,j}=\bar{x}_j \right\} \right| = 1.
\end{equation}
Here, following the notation introduced in Algorithm~\ref{alg: x_sup}, Tangible\_equations denotes the set of $i$ such that $b_i$ is tangible. 

\begin{theorem}
\label{theorem: sup greatest solution}
Consider the system $A\otimes x=b$ over the supertropical semiring.  
\begin{itemize}
    \item[(i)] If $\bar x_{\sup}$ defined in Algorithm~\ref{alg: x_sup} is a solution then it is a greatest modulus solution of $A\otimes x=b$;
    \item[(ii)] $\bar x_{\sup}$ is a solution if and only if the system $A\otimes x=b$ is solvable and condition~\eqref{e:tangible_condition} holds. 
\end{itemize}
\end{theorem}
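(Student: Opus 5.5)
For (i), I will argue directly from the construction in Algorithm~\ref{alg: x_sup}. Let $x$ be any solution. Then $|x|$ solves the tropical system $|A|\otimes z=|b|$, so $|x|\le \bar x$ componentwise, where $\bar x$ is the tropical greatest solution. Hence it suffices to look at the components $j$ with $|\bar x_{\sup,j}|=\bar x_j-1$, that is, those with $\widetilde S_j\neq\emptyset$ and $A_{ij}$ a ghost for some $i\in\widetilde S_j$. For such a $j$ I will show that $|x_j|=\bar x_j$ is impossible. In that case the term $A_{ij}\otimes x_j$ would be a ghost of modulus $|b_i|$, and $|b_i|$ is also the maximal modulus in row $i$. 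By the layered addition rule the row sum would then be ghost, since any sum containing a ghost at the top layer is ghost ($\overline{2}+k=\overline{2}$ in $\mathbb N_2$). But $b_i$ is tangible, a contradiction. So $|x_j|\le\bar x_j-1$, using discreteness of $\mathbb Z_{\max}$. Together these give $|x|\le|\bar x_{\sup}|$, which proves (i).

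For (ii), the direction ``$\Rightarrow$'' has two parts. Solvability is trivial. For condition~\eqref{e:tangible_condition}, take a tangible row $i$ and evaluate $(A\otimes\bar x_{\sup})_i$. The terms attaining modulus $|b_i|$ are those $j$ with $i\in S_j(\bar x)$ and $|\bar x_{\sup,j}|=\bar x_j$. Since $i$ is a tangible row, these are the $j$ with $i\in\widetilde S_j$, and then $\bar x_{\sup,j}=\bar x_j$ is tangible, because the ghost choice $\bar x_j^\circ$ only occurs when $\widetilde S_j=\emptyset$. Each such term $A_{ij}\otimes\bar x_j$ is tangible: in this branch no $A_{ij}$ with $i\in\widetilde S_j$ is a ghost. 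For the sum to equal the tangible $b_i$ there must be exactly one such term. Zero such terms would give modulus $<|b_i|$, and two or more would give a ghost. This yields exactly~\eqref{e:tangible_condition}.

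For ``$\Leftarrow$'', assume solvability and~\eqref{e:tangible_condition}, and verify $A\otimes\bar x_{\sup}=b$ row by row. All terms have modulus $\le|b_i|$ because $|\bar x_{\sup}|\le\bar x$.

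In a tangible row $i$, \eqref{e:tangible_condition} gives exactly one $j$ with a term attaining $|b_i|$, and by the argument above that term is tangible. So the sum equals $b_i$.

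In a ghost row $i$, I need at least one term of modulus $|b_i|$; any such term then makes the sum a ghost of the correct modulus, since a ghost plus anything of equal modulus is a ghost. This is where solvability enters, and I expect it to be the main obstacle. Take a solution $x$. Some $j$ has $|A_{ij}\otimes x_j|=|b_i|$, so $|x_j|=\bar x_j$ and $i\in S_j(\bar x)$. By the argument in (i), this $j$ cannot fall in the ``$\bar x_j-1$'' branch. Hence $|\bar x_{\sup,j}|=\bar x_j$, which gives the required term of modulus $|b_i|$ in row $i$. This completes (ii).
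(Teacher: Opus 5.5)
Your part (i), your ``$\Rightarrow$'' direction of (ii), and your treatment of the tangible rows in ``$\Leftarrow$'' are correct. The gap is in the ghost rows of ``$\Leftarrow$''. Your claim that a single term of modulus $|b_i|$ ``makes the sum a ghost'' is false. If the only term of $(A\otimes\bar x_{\sup})_i$ attaining $|b_i|$ is tangible, the row evaluates to the tangible element of modulus $|b_i|$, and that is not $b_i$. The rule that a ghost absorbs anything of equal modulus applies only once you have a ghost term, or a second term, at the top layer. Your argument produces neither: it transfers the \emph{modulus} of one top term of $x$ to $\bar x_{\sup}$, but not its layer.

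To repair this, you must use the way $x$ itself satisfies the ghost row $i$. There are two cases.

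(a) Two indices $j\neq k$ have $|A_{ij}\otimes x_j|=|A_{ik}\otimes x_k|=|b_i|$. Your modulus argument applies to both, so row $i$ of $A\otimes\bar x_{\sup}$ has two top terms and is therefore a ghost.

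(b) Exactly one such $j$ exists. Then $A_{ij}\otimes x_j$ must itself be a ghost, so either $A_{ij}$ or $x_j$ is a ghost. If $A_{ij}$ is a ghost, then $A_{ij}\otimes\bar x_{\sup,j}$ is a ghost of modulus $|b_i|$. If instead $x_j$ is a ghost, you need the additional fact that $\widetilde S_j=\emptyset$. Otherwise, for $i'\in\widetilde S_j$, the ghost term $A_{i'j}\otimes x_j$ has modulus $|A_{i'j}|\otimes\bar x_j=|b_{i'}|$ and would turn the tangible row $i'$ of $A\otimes x$ into a ghost. Hence $\bar x_{\sup,j}=\bar x_j^{\circ}$, which again gives a ghost top term in row $i$.

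This layer-tracking case analysis is exactly what the paper's proof supplies in its second, third and fourth cases. Without it, the ``$\Leftarrow$'' direction is not established.
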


\begin{proof} (i):  Let $x$ be a solution of $A\otimes x=b$. We need to show $|x_j|\leq |\bar x_{\sup,j}|$ for all $j$. First observe that $|\bar x_{\sup,j}|=\bar x_j$ unless there are ghosts among $A_{ij}$ for $i\in \widetilde{S}_j$. If $|\bar x_{\sup,j}|=\bar x_j$, then $|x_j|\leq |\bar x_{\sup,j}|$ since $|x|$ is a solution of $|A|\otimes y=|b|$ and $\bar{x}$ is the greatest solution of this tropical system. If there are ghosts among $A_{ij}$ for $i\in\widetilde{S}_j$, then $|x_j|=\bar x_j$ is impossible and $\bar x_j-1=|\bar x_{\sup,j}|$ is the greatest modulus of this component. 

The above argument also gives us 
\begin{equation}
\label{e:ineq-useful}    
|x|\leq |\bar x_{\sup}|\leq \bar x,\quad \text{for}\; x\colon A\otimes x=b. 
\end{equation}

(ii): Suppose $ \bar x_{\sup} $ solves the system. Then $A\otimes x=b$ is solvable with $x=\bar x_{\sup}$. Furthermore, if a tangible equation index $i$ is contained in two distinct sets $\widetilde S_j $ and $ \widetilde S_{j'}$ for $j\neq j'$ and $\bar x_{\sup,j}=\bar x_j$ and $\bar x_{\sup,j'}=\bar{x}_j$ both hold then 
$$
      (A\otimes \bar x_{\sup})_i
      \;=\;
      A_{ij}\otimes\bar x_{\sup,j}
      \;\oplus\;
      A_{ij'}\otimes\bar x_{\sup,j'}.
$$
This supertropical tangible addition yields a ghost value  $ (b_i^\circ) $, violating this tangible equation, and hence contradicting that $ \bar x_{\sup} $ is a solution. Therefore, condition~\eqref{e:tangible_condition} should hold.

Conversely, assume that $x$ is a solution of $A\otimes x=b$. By \eqref{e:ineq-useful} $|x|\leq |\bar x_{\sup}|\leq \bar x$ implying that $|\bar x_{\sup}|$ solves $|A|\otimes y=|b|$. Thus we need to show that $\bar x_{\sup}$ satisfies each equation of the supertropical system $A\otimes x=b$, given that its modulus satisfies the tropical system $|A|\otimes y=|b|$.

First, each tangible $i$ lies in precisely one set $ \widetilde S_j$ for which $\bar{x}_{\sup,j}=\bar{x}_j$.  For that unique $j$, the definition of $\bar{x}_{\sup}$ ensures that
$$
      A_{ij}\otimes\bar x_{\sup,j} = b_i,
      \qquad
      A_{ik}\otimes\bar x_{\sup,k} < b_i
      \quad(\forall\,k\neq j).
$$
Thus, in any tangible equation index $i$, the maximum is uniquely attained at component $j$, giving the required tangible value $b_i$.  

\if{ 
Second, if $b_i$ is a ghost and $i\in S_j$ for some $j$ with $\widetilde{S}_j=\emptyset$, then the $i$th equation is satisfied since $A_{ij}\otimes \overline{x}_{\sup,j}=A_{ij}\otimes \overline{x}^{\circ}_{j}=b_i$.  
}\fi 

Second, if $b_i$ is a ghost and $|A_{ij}| \otimes |x_j|=|A_{ik}|\otimes |x_k|=|b_i|$ for some $j\neq k$, then by \eqref{e:ineq-useful} we also have 
 $|A_{ij}| \otimes |\bar x_{\sup,j}|=|A_{ik}|\otimes |\bar x_{\sup,k}|=|b_i|$ implying that the $i$th equation is satisfied by $\bar x_{\sup}$. 

Third, if $b_i$ is a ghost and $|A_{ij}| \otimes |x_j|=|b_i|$ with $A_{ij}$ being a ghost for some $j$, then by \eqref{e:ineq-useful} we also have 
$|A_{ij}| \otimes |\bar x_{\sup,j}|=|b_i|$ implying that the $i$th equation is satisfied by $\bar x_{\sup}$. 

Fourth, if $b_i$ is a ghost and $|A_{ij}|\otimes |x_j|=|b_i|$ with $x_j$ being a ghost for some $j$, then $x_j=\bar x_j^{\circ}$ and $\widetilde{S}_j=\emptyset$, otherwise $x$ would violate one of the tangible equations. In this case also $\bar x_{\sup,j}=x_j=\bar x_j^{\circ}$ thus $A_{ij} \otimes \bar x_{\sup,j}=b_i$ and $i$th equation of $A\otimes x=b$ is satisfied by $\bar x_{\sup}$.

The above case by case analysis shows that $A\otimes x=b$ is satisfied by $\bar x_{\sup}$, thus the proof is complete.  
\end{proof}

\subsection{Minimal modulus solutions}

We will now discuss a special case of $A\otimes x=b$ in which all entries of $b$ are tangible or signed.

\begin{proposition}
Consider the system $A\otimes x=b$ over the supertropical semiring. Assume that all entries of $b$ are tangible, condition~\eqref{e:tangible_condition} is satisfied, and that $S_j(|\bar x_{\sup}|)\neq\emptyset$ for all $j \in [n]$. Then the system has a unique solution, and this solution is $\bar x_{\sup}=\bar x$.
\end{proposition}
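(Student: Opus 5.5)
The plan is to first show that $\bar x_{\sup}$ is a solution and equals $\bar x$ (with all components tangible), and then show that every solution coincides with it. Since all $b_i$ are tangible, $\widetilde S_j=S_j(\bar x)$ for every $j$. I would read the hypothesis $S_j(|\bar x_{\sup}|)\neq\emptyset$ as excluding the branch of Algorithm~\ref{alg: x_sup} that outputs $\bar x_j-1$. Indeed, if $|\bar x_{\sup,j}|=\bar x_j-1$, then $|A_{ij}|\otimes|\bar x_{\sup,j}|<|b_i|$ for all $i$, so $S_j(|\bar x_{\sup}|)=\emptyset$. The branch $\widetilde S_j=\emptyset$ is also excluded, because $S_j(\bar x)\neq\emptyset$ always holds. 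Hence $\bar x_{\sup,j}=\bar x_j$ is tangible for all $j$, and $S_j(|\bar x_{\sup}|)=S_j(\bar x)$.

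Next I show that $\bar x_{\sup}$ is a solution. Since $\bar x$ solves the tropical system, the sets $S_j(\bar x)$ cover $[m]$. By condition~\eqref{e:tangible_condition}, each row $i$ lies in exactly one $S_j(\bar x)$. That branch was taken only because $A_{ij}$ is not a ghost, so $A_{ij}\otimes\bar x_j=b_i$ is tangible and the maximum in row $i$ is attained uniquely. Therefore $A\otimes\bar x_{\sup}=b$. Note that the sets $S_j(\bar x)$ are nonempty and pairwise disjoint, so they form a partition of $[m]$. By Theorem~\ref{theorem: sup greatest solution}(i), $\bar x_{\sup}$ is then the greatest modulus solution, and in particular $|x|\le\bar x$ for every solution $x$, by \eqref{e:ineq-useful}.

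For uniqueness, let $x$ be any solution. Then $|x|$ solves $|A|\otimes z=|b|$, so the sets $S_j(|x|)$ cover $[m]$. Moreover, $S_j(|x|)\subseteq S_j(\bar x)$, and $S_j(|x|)\neq\emptyset$ only if $|x_j|=\bar x_j$. Because the $S_j(\bar x)$ partition $[m]$ and each is nonempty, covering $[m]$ forces every $j$ to contribute. For any $j$, pick $i\in S_j(\bar x)$. Row $i$ is covered by no other index $k$, so it must be covered by $j$ itself, which gives $|x_j|=\bar x_j$ for all $j$.

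It remains to fix the layer of each $x_j$. If $x_j$ were a ghost, then for $i\in S_j(\bar x)$ the term $A_{ij}\otimes x_j$ would be a ghost attaining $|b_i|$, making $(A\otimes x)_i$ a ghost. This contradicts the tangibility of $b_i$. Hence $x_j=\bar x_j$ for all $j$, so $x=\bar x_{\sup}=\bar x$.

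The main obstacle is interpreting the hypothesis $S_j(|\bar x_{\sup}|)\neq\emptyset$ correctly so as to rule out the $\bar x_j-1$ branch, and keeping in mind that in the tangible case the covering sets are forced to be a partition. The rest is routine.
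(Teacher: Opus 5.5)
Your proof is correct and takes essentially the same route as the paper: you show $\bar x_{\sup}=\bar x$ with the sets $S_j(\bar x)$ nonempty and pairwise disjoint, deduce $|x|=\bar x$ for any solution, and rule out ghost components using a tangible row in $S_j(\bar x)$. The differences are minor. You verify directly that $\bar x_{\sup}$ solves the system, so existence follows from condition~\eqref{e:tangible_condition} alone; that condition also already gives the covering you assert at the start of your second paragraph. The paper instead appeals to Theorem~\ref{theorem: sup greatest solution}(ii), which presupposes solvability. You also prove uniqueness of the tropical solution with your partition argument rather than citing \cite{Butkovi_book}.
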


\begin{proof}
By Theorem~\ref{theorem: sup greatest solution} part (ii), under the assumption that condition~\eqref{e:tangible_condition} holds, $\bar x_{\sup}$ is a solution of the system whenever the system is solvable. Under the conditions of the proposition, $S_j(|\bar{x}_{\sup}|)=S_j(\bar{x})=\widetilde{S}_j$ for all $j \in [n]$,  $\bar x_{\sup}=\bar x$ and all sets $\widetilde{S}_j=S_j(\bar{x})$ are pairwise disjoint. 
In this case, if $x$ is a solution then, since the solution of the tropical system $|A|\otimes y=|b|$ is unique  and equal to $\bar x$ (see e.g. \cite{Butkovi_book} Theorem 3.1.6), we have $|x|=\bar x$. None of the components of $x$ can be ghosts since $S_j(\bar x)$ is not empty for all $j$ and a ghost component of $x$ would lead to one of the equations of $A\otimes x=b$ being violated. This implies that $x=\bar x$.

\if{
Condition~\eqref{e:tangible_condition} guarantees that each tangible equation $i$ belongs to exactly
one set $\widetilde S_j$. For any column $j$ with $\widetilde S_j\neq\emptyset$, there exists a tangible equation $i\in\widetilde S_j$ such that $A_{ij}\otimes x_j = b_i$. The same equality holds for $\bar x_{sup}$ by construction. Since $\bar x_{sup}$ is the maximal solution componentwise, we must have
$x_j=\bar x_{sup,j}$.

By assumption, $\widetilde S_j\neq\emptyset$ for all $j \in [n]$, hence the above argument applies to every component. Therefore $x=\bar x_{sup}$, proving that the solution is unique.
}\fi
\end{proof}

\begin{proposition}[\cite{Sarah_elt}\label{proposition:bsignec}] Consider the system $A\otimes x=b$ over symmetrized semiring, and let $b$ have only signed entries. Then, $A \otimes x=b$ is solvable if and only if there is a solution $x$ such that $|x|=z$ and $z$ is a minimal solution to $|A| \otimes z=|b|$.
\end{proposition}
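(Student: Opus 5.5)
The "if" direction is immediate: a solution $x$ with $|x|=z$ is in particular a solution. For the "only if" direction, I would start from an arbitrary solution $x$ and reduce it, component by component, to a solution whose modulus is a minimal solution of the tropical system $|A|\otimes z=|b|$.

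Let $x$ solve $A\otimes x=b$. Taking moduli, $|x|$ solves $|A|\otimes z=|b|$. By the Proposition on the structure of the tropical solution set, there is a minimal solution $d\le |x|$. Moreover, $d_j\in\{\bar x_j,-\infty\}$, and the support $K=\{j: d_j=\bar x_j\}$ is a minimal cover of $[m]$ by the sets $S_j(\bar x)$. Since $d\le |x|\le \bar x$, we have $|x_j|=\bar x_j$ for every $j\in K$. Define $y$ by $y_j=x_j$ for $j\in K$ and $y_j=-\infty$ (the zero $\szero$) otherwise. Then $|y|=d$, and it remains to show that $A\otimes y=b$.

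Fix a row $i$. Since $b_i$ is signed, the sum $(A\otimes x)_i=b_i$ has a signed value of modulus $|b_i|$. In the symmetrized semiring, a sum of terms of equal top modulus is signed only if every top term is signed and all top terms carry the same sign, namely $\operatorname{sign}(b_i)$. Hence every $j$ with $|A_{ij}|\otimes|x_j|=|b_i|$ satisfies $A_{ij}\otimes x_j=b_i$, in particular with no balance. Passing from $x$ to $y$ removes some terms. Terms of modulus strictly below $|b_i|$ never affect the sum. The remaining top terms in row $i$ are those with $j\in K$ and $i\in S_j(\bar x)$, since $|x_j|=\bar x_j$ for $j\in K$. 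At least one such $j$ exists because $K$ covers $[m]$. Each of these terms equals $b_i$, so by idempotency of addition on equal signed elements their sum is again $b_i$. Therefore $(A\otimes y)_i=b_i$.

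The main point to handle carefully is the sign argument: a signed right-hand side forces all maximal terms of $(A\otimes x)_i$ to have the same sign and no balance, so that discarding some of them cannot create a balance or change the sign. This follows directly from the addition table of $\mathbb{B}_s$ lifted to $\mathbb{B}_s\ltimes\mathbb{R}_{\max}$: $0\oplus\ominus 0=0^\bullet$, and anything added to $0^\bullet$ stays $0^\bullet$. As a byproduct, the argument shows that every minimal modulus solution has as modulus a minimal tropical solution lying below the modulus of some solution.
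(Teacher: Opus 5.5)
Your proof is correct and takes essentially the same approach as the paper. In both, the key point is that a signed $b_i$ forces every top-modulus term of row $i$ to equal $b_i$, so discarding columns outside a minimal cover leaves a nonempty set of such terms, and the row still sums to $b_i$. You pick the minimal cover in one step, as the support of a minimal tropical solution $d\le|x|$, which also explicitly zeroes the components with $|x_j|<\bar x_j$. The paper instead removes redundant components one at a time.
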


\begin{proof}
``If'' part: obvious.

``Only if'' part: Suppose that $x$ is a solution of $A\otimes x=b$. 
We then have
$$
b_k=A_{k 1} \otimes x_1 \oplus A_{k 2} \otimes x_2 \oplus \cdots \oplus A_{k n} \otimes x_n=\bigoplus_{j \in [n]} A_{k j} \otimes x_j \quad \forall k \in [m].
$$
For any $k$, note that if for $j, l \in [n]$ we have $\left|A_{k j}\right| \otimes\left|x_j\right|>\left|A_{k l}\right| \otimes\left|x_l\right|$, then we may discard the term $A_{k l} \otimes x_l$ from the above summation.
Let the set $L_k \subset [n]$ be all such terms that are discarded for this reason in the summation for $b_k$. Observe that if $k \notin S_j(\bar{x})$ then $j \in L_k$. We have
$$
b_k=\bigoplus_{j \in [n] \backslash L_k} A_{k j} \otimes x_j .
$$

We now observe that all the terms now included in the summation should have equal absolute values and all have the same sign, the sign being the sign that $b_k$ has, and there should be no balance terms in the summation. Indeed, from the definition of addition in symmetrized tropical algebra, if we are summing equal values with different signs, or there are balance terms included in the summation, then the summation will produce a balance term. However, we have assumed that $b$ is signed so $b_k \neq \bigoplus_{j \in [n] \backslash L_k} A_{k j} \otimes x_j$ in this case. If the signs in the summation were the same, but different to the sign for $b_k$, the summation would then produce $\ominus b_k$ instead of $b_k$ and hence $b_k \neq \bigoplus_{j \in [n] \backslash L_k} A_{k j} \otimes x_j$. Therefore, we have that $A_{k i} \otimes x_i=A_{k j} \otimes x_j$ for all $i, j \in [n] \backslash L_k$.

By the above, all terms $A_{k j} \otimes x_j$ in the sum are such that $k \in S_j(\bar{x})$. If $|x|$ does not correspond to a minimal cover, there is an $S_j(\bar{x})$ which can be removed meaning that the corresponding term, $A_{k j} \otimes x_j$, can be removed, for each $j$ such that $k \in S_j(\bar{x})$. This means that $x_j$ can be set to $\szero$.

Gradually setting such redundant components to $\szero$ we obtain a solution to the symmetrized system, whose absolute value is a solution to the tropical system corresponding to a minimal cover.
\end{proof}

The following example shows that for a symmetrized system, $A \otimes x=b$, if $x$ is not a solution but is obtained from $z=|x|$, which is a solution to $|A| \otimes z=|b|$ and corresponding to the minimal cover, then we can not extend the minimal cover to find a solution that does satisfy the symmetrized system.

\begin{example}[\cite{Sarah_elt}] Consider the system $A\otimes x=b$ over the symmetrized semiring, where $A$ and $b$ are given by
$$
A=
\left(\begin{array}{ccc}
-2 & \ominus 2 & 2 \\
-5 & \ominus-3 & -2 \\
\szero & \szero & \ominus 3 \\
-3 & -3 & 2 \\
1 & \ominus 4 & \szero
\end{array}\right),\quad 
b=\left(\begin{array}{c}
\ominus 3 \\
\ominus-2 \\
1 \\
0 \\
\ominus 5
\end{array}\right).
$$
We observe that $\bar x=(3,1,-2)^{\mathsf T}$ is a solution to the tropical system $|A|\otimes z=|b|$ since $\bigcup_{j \in [3]} S_j(\bar{x})=[5]$. Indeed, we have $S_1(\bar{x}) \cup S_2(\bar{x}) \cup S_3(\bar{x})=[5]$ as $S_1(\bar{x})=\{2,4\}, S_2(\bar{x})=\{1,2,5\}$ and $S_3(\bar{x})=\{3,4\}$, which means that the tropical system is solvable. There is also a unique minimal cover given by $S_2(\bar{x}),S_3(\bar{x})$ and the corresponding solution of the tropical system is given by $z=(-\infty, 1,-2)^{\mathsf T}$. We try to reintroduce the signs to find a symmetrized solution $x$ such that $|x|=z$. However, we see that we are unable to choose signs for $x$ such that the third and fourth equations, where we have $b_3$ and $b_4$ on the right hand side, are both satisfied. This is because $b_3$ and $b_4$ both depend on the term $x_3$, but require $x_3$ to have different signs. Therefore, the minimal cover of the tropical system does not give a solution to the symmetrized system, hence $A \otimes x=b$ is unsolvable by Proposition~\ref{proposition:bsignec}.

Below, we will demonstrate that, indeed, extending the minimal cover of the tropical system will not provide a solution to the symmetrized system if the solution given by the minimal cover already does not provide a solution to the symmetrized system and $b$ is signed.

Let us extend the minimal cover to $S_1, S_2, S_3$, we would then have $|x|=(3,1,-2)^{\mathsf T}$. Now, observe that the entries $b_1, b_3$ and $b_5$ each only depend on one entry of $x$, which are $x_2$, $x_3$ and $x_2$ respectively. This means we must choose signs for $x_2$ and $x_3$ such that $b_1, b_3$ and $b_5$ are satisfied, as the signs we choose for the remaining entries of $x$ will not effect whether the signs for $(A \otimes x)_i=b_i, i=1,3,5$ are correct. In order to satisfy $b_1, b_3$ and $b_5$, we must have $x_2=1$ and $x_3=\ominus-2$.
The remaining question is, what sign do we choose for $x_1$? We have the following options:
\begin{enumerate}
    \item $x_1=3$, then we have $x=(3,1, \ominus-2)^{\mathsf T}$. This gives $A \otimes x=\left(\ominus 3,-2,1,0^\bullet, \ominus 5\right)^{\mathsf T} \neq b$.
    \item $x_1=\ominus 3$, then we have $x=(\ominus 3,1, \ominus-2)^{\mathsf T}$. This gives $A \otimes x=(\ominus 3, \ominus-2,1, \ominus 0, \ominus 5)^{\mathsf T} \neq b$.
\end{enumerate}
Therefore, extending our minimal cover does not provide a solution to the symmetrized system, as predicted by Proposition~\ref{proposition:bsignec}
\end{example}

We will now provide a counterexample that shows we indeed need $b$ to be signed in Proposition~\ref{proposition:bsignec}. This example also gives an idea how a minimal cover for the tropical system can be extended to yield a minimal modulus solution to the symmetrized system $A\otimes x=b$. The same idea will be used to construct minimal modulus solutions in the supertropical case.

\begin{example}[\cite{Sarah_elt} \label{ex: minimal cover}]
Consider the system $A \otimes x=b$ over the symmetrized semiring, where $A$ and $b$ are given by
$$
A=
\left(\begin{array}{ccc}
0 & \ominus-2 & 1 \\
0 & 3 & \ominus 0 \\
-5 & -3 & 1
\end{array}\right),\quad b=\left(\begin{array}{c}
4^{\bullet} \\
5 \\
\ominus 4
\end{array}\right).
$$

The greatest tropical solution is $\bar x=(4,2,3)^{\mathsf T}$ as $\bigcup_{j \in [3]} S_j(\bar{x})=[3]$ holds, where $S_1(\bar{x})=\{1\}, S_2(\bar{x})=\{2\}, S_3(\bar{x})=\{1,3\}$. The only minimal cover is $S_2(\bar{x}), S_3(\bar{x})$, however, the minimal cover does not provide a solution to the symmetrized system. The corresponding minimal solution to the tropical system $|A|\otimes z=|b|$ is $z=(-\infty, 2,3)^{\mathsf T}$. We would like to find a symmetrized solution $x$ such that $|x|=z$. Notice that $A_{11} \otimes x_1 \oplus A_{12} \otimes x_2 \oplus A_{13} \otimes x_3=A_{13} \otimes x_3$ and we need $A_{13} \otimes x_3=b_1$, therefore the entry $b_1$ only depends on $x_3$. We require $b_1=4^\bullet$ and have $A_{13}=1$, therefore we must have $x_3=3^\bullet$ so that $A_{13} \otimes x_3=1 \otimes 3=4^\bullet=b_1$. However, the entry $b_3$ only depends on $x_3$, i.e. we need $A_{33} \otimes x_3=b_3$. We require $b_3=\ominus 4$ and as $A_{33}=1$, we would need $x_3=\ominus 3$ to satisfy $b_3$. Since we cannot have $x_3=3^{\bullet}$ and $x_3=\ominus 3$, we cannot satisfy both $b_1$ and $b_3$, hence we cannot form a solution, $x$, to the symmetrized system from $|x|=(-\infty, 2,3)^{\mathsf T}$, which resulted from the minimal cover of the tropical system.
If Proposition~\ref{proposition:bsignec} could also hold for $b$ with some balanced components, there would be no solution for this system, therefore extending the minimal cover would not result in a solution. However, we will show that for this example, extending the minimal cover does provide us with a solution and therefore Proposition~\ref{proposition:bsignec} does not hold for $b$ not signed.

Let us extend the minimal cover to $S_1(\bar{x}), S_2(\bar{x}), S_3(\bar{x})$, then the solution to the tropical system is $z=(4,2,3)^{\mathsf T}$. We need to find a symmetrized solution $x$ such that $|x|=z$. Indeed $x=(4,2, \ominus 3)^{\mathsf T}$ satisfies the symmetrized system. Hence, we have found a symmetrized solution $x$ and $x$ has not been obtained from solution given by the minimal cover of the tropical system. Therefore, we have shown that the requirement that $b$ is signed is essential for Proposition~\ref{proposition:bsignec}.
\end{example}

Example \ref{ex: minimal cover} suggests that, to determine the whole solution set to the symmetrized system, we need to find all extended minimal covers of the tropical system. This involves finding every minimal cover and all its possible extensions. As we will show later, the solutions derived from these extended minimal covers are the minimal modulus solutions of the symmetrized system. By finding all such minimal modulus solutions, along with the greatest modulus solution, we can achieve a rough description of the entire solution set of the symmetrized system. See Algorithm~\ref{alg: minimal solutions symm} below. 

Although finding all minimal modulus solutions in the supertropical case does not imply that the whole solution set can be described using these solutions only, we also present a similar (and more easy) Algorithm~\ref{alg: minimal solutions super} and similar results for this case. 

\begin{algorithm}[H]
\caption{Computing the minimal modulus solutions of $A\otimes x=b$ (symmetrized) \label{alg: minimal solutions symm}}
\begin{algorithmic}[1]
\State \textbf{Inputs:} A solvable symmetrized system $A \otimes x=b$.
\State \textbf{Output:} The set of minimal modulus solutions.
\vspace{0.5cm}
\State Let $ \text{Signed\_equations} = \{ i \in [m] : b_i \text{ is signed} \} $ and $ \text{Balanced\_equations} = \{ i \in [m] : b_i \text{ is balanced} \} $.
\vspace{0.5cm}
\State Compute $\bar x_{\sym}$ using Algorithm~\ref{alg: x_sym}. 
\vspace{0.5cm}
\State Compute the sets $S_j(|\bar x_{\sym}|)=\left\{i \in [m]: |\bar x_{\sym,j}|\otimes\left|A_{i j}\right|=\left|b_i\right| \right\}$ for all $j \in [n]$.
\vspace{0.5cm}
\State Find all minimal covers of the tropical system. That is, find all minimal covers of $[m]$ using $S_j(|\bar x_{\sym}|)$.
\vspace{0.5cm}
\For {each minimal cover $K'$}
    \State Set $d_j=\bar x_{\sym,j}$ for all $j \in K'$ and $d_j=\szero$ for all $j \notin K'$.
    \State Check whether $d$ satisfies the symmetrized system, if so, append $d$ as a minimal solution.
    \State If not, find all minimal covers of the remaining tropical subsystem using some modified subsets $S_j^\checkmark$. That is, find all minimal covers of $\widetilde M=\{i \in Balanced\_equations:\bigoplus_{j=1}^{n} d_j \otimes A_{ij} \neq b_i\}$ using the sets $S_j^\checkmark$ for all $j \notin K'$, where \[
S_j^\checkmark = 
\left\{
\begin{array}{lll}
\{i \in \widetilde M :&|\bar x_{\sym,j}| \otimes |A_{ij}| = |b_i|\}, & \text{if } \widetilde S_j = \emptyset \\
\{i \in \widetilde M : &|\bar x_{\sym,j}| \otimes |A_{ij}| = |b_i|\ \text{and  $A_{ij}$  balanced or}& \\  &\bar x_{\sym,j} \otimes A_{ij} = \ominus \operatorname{sign}\left( \bigoplus_{l \in K'} d_l \otimes A_{il} \right) |b_i|\ \text{and}\ A_{ij}\ \text{signed}\}, & \text{if } \widetilde S_j \neq \emptyset.
\end{array}
\right.
\]

    \For{each minimal cover $K''$}
    \State Set $d_j=\bar x_{\sym,j}$ for all $j \in K''$.
    \EndFor
\EndFor
\State For each finite component $d_j$ where $S_j(|\bar x_{\sym}|)\neq \emptyset$ check if setting $d_j\to \szero$ produces a solution to the symmetrized system. If this is the case, discard $d$.\\ Discard all duplicate minimal solutions.
\end{algorithmic}
\end{algorithm}

Computing the greatest modulus solution and the sets $S_j(|\bar x_{\sym}|)$ requires $O(mn)$ time. The algorithm then enumerates all minimal covers of $[m]$ by these sets. Let $r_1$ denote the number of such covers. The enumeration of minimal covers is equivalent to the enumeration of hypergraph transversals, for which algorithms with quasi-polynomial output cost may exist~\cite{Blasius}. We therefore keep this cost abstract as $E(m,n)$. The total cost of this first enumeration is $O(E(m,n)\cdot r_1)$. For each minimal cover $K'$, the algorithm checks whether it produces a valid solution and constructs the subsystem (from the unsatisfied balanced equations), which requires $O(mn)$ time per cover, since Algorithm~\ref{alg: minimal solutions symm} additionally builds modified sets $S_j^\checkmark$. The algorithm then enumerates minimal covers of the subsystem: if $r_2(K')$ denotes their number, this second enumeration costs $O(E(m,n)\cdot r_2(K'))$, and verifying the corresponding candidate solutions costs $O(mn\cdot r_2(K'))$. Writing $R = r_1 + \sum_{K'} r_2(K')$ for the total number of minimal covers enumerated across both stages, the overall time complexity of Algorithm~\ref{alg: minimal solutions symm} is $O((E(m,n)+mn)\cdot R + mn)$.

\begin{algorithm}[H]
\caption{Computing the minimal modulus solutions of $A\otimes x=b$ (supertropical) \label{alg: minimal solutions super}}
\begin{algorithmic}[1]
\State \textbf{Inputs:} A supertropical system $A \otimes x=b$.
\State \textbf{Output:} The set of minimal modulus solutions.
\vspace{0.5cm}
\State Let $ \text{Tangible\_equations} = \{ i \in [m] : b_i \text{ is tangible} \} $ and $ \text{Ghost\_equations} = \{ i \in [m] : b_i \text{ is ghost} \} $.
\vspace{0.5cm}
\State Compute $\bar x_{\sup}$ using Algorithm~\ref{alg: x_sup}.
\vspace{0.5cm}
\State Compute the sets $S_j(|\bar x_{\sup}|)=\{i \in [m]: |\bar x_{\sup,j}| \otimes |A_{ij}|=|b_i| \}$ for all $j \in [n]$.
\vspace{0.5cm}
\State Find all minimal covers of the tropical system. That is, find all minimal covers of $[m]$ using $S_j(|\bar x_{\sup}|)$.
\vspace{0.5cm}
\For {each minimal cover $K'$}
    \State Set $d_j=\bar x_{\sup,j}$ for all $j \in K'$ and $d_j=\szero$ for all $j \notin K'$
    \State Check whether $d$ satisfies the supertropical system, if so, append $d$ as a minimal solution.
    \State If not, find all minimal covers of the remaining tropical subsystem. That is, find all minimal covers of $\widetilde M=\{i \in Ghost\_equations:\bigoplus_{j=1}^{n} d_j \otimes A_{ij} \neq b_i\}$   using $S_j(|\bar x_{\sup}|)$ for all $j \not \in K'$.
    \For{each minimal cover $K''$}
    \State Set $d_j=\bar x_{\sup,j}$ for all $j \in K''$.
    \State Check whether $d$ satisfies the supertropical system, if so, append $d$ as a minimal modulus solution.
    \EndFor
\State For each finite component $d_j$ where $S_j(|\bar x_{\sup}|)\neq \emptyset$ check if setting $d_j\to \szero$ produces a solution to the supertropical system. If this is the case, discard $d$.
\EndFor
\State Discard all duplicate minimal solutions.
\end{algorithmic}
\end{algorithm}

Algorithm~\ref{alg: minimal solutions super} follows the same two-stage structure as Algorithm~\ref{alg: minimal solutions symm}, but with a simpler second stage. As in Algorithm~\ref{alg: minimal solutions symm}, computing the greatest modulus solution and the sets $S_j(|\bar x_{\sup}|)$ requires $O(mn)$ time, and the first enumeration of minimal covers of the whole system produces $r_1$ covers at a cost of $O(E(m,n)\cdot r_1)$. For each such cover, the algorithm checks validity and identifies the subsystem in $O(mn)$ time. Unlike Algorithm~\ref{alg: minimal solutions symm}, no modified sets are constructed, and the second enumeration of minimal covers is performed directly on the subsystem using the original sets $S_j(|\bar x_{\sup}|)$. If $r_2(K')$ denotes the number of minimal covers of the subsystem associated with a first-stage cover $K'$, then the second enumeration and verification steps contribute $O(E(m,n)\cdot r_2(K') + mn\cdot r_2(K'))$. Letting again $R = r_1 + \sum_{K'} r_2(K')$ denote the total number of enumerated covers, the overall time complexity of Algorithm~\ref{alg: minimal solutions super} is $O((E(m,n)+mn)\cdot R + mn)$, with a smaller constant factor than in Algorithm~\ref{alg: minimal solutions symm} due to the absence of the modified-set construction.

We now give some examples demonstrating the work of Algorithms~\ref{alg: minimal solutions symm} and \ref{alg: minimal solutions super}.

\begin{example}[Minimal modulus solutions of the symmetrized system]
Find all minimal solutions of the symmetrized tropical system, $A \otimes x=b$, given by

$$
\left(\begin{array}{cccc}
16 & \ominus 15 & -5 & \ominus 17 \\
\ominus 17 & 16 & 4 & \ominus 18 \\
-6 & 1 & 10 & -3 \\
7 & -7 & 6 & 13 \\
\ominus 18 & \ominus 17 & 0 & 19 
\end{array}\right) \otimes\left(\begin{array}{l}
x_1 \\
x_2 \\
x_3 \\
x_4
\end{array}\right)=\left(\begin{array}{c}
6^{\bullet} \\
7^{\bullet} \\
\ominus 3 \\
2 \\
8
\end{array}\right).
$$
We have $\bar x_{\sym}=(\ominus -10,\ominus -9, \ominus -7, -11)^{\mathsf T}$ by Algorithm~\ref{alg: x_sym}. We then use the sets $S_1(|\bar x_{\sym}|)=\{1,2,5\}$, $S_2(|\bar x_{\sym}|)=\{1,2,5\}$, $S_3(|\bar x_{\sym}|)=\{3\}$ and $S_4(|\bar x_{\sym}|)=\{1,2,4,5\}$ to find all minimal covers of $[m]=[5]$. There is a unique minimal cover in this case, namely $(3, 4)$, which corresponds to the vector $d = (\szero, \szero, \ominus -7, -11)^{\mathsf T}$. This vector does not satisfy the symmetrized system, as the first and second equations remain unsatisfied. We then need to find all minimal covers of $\widetilde M=\{1,2\}$ using $S_1^\checkmark=\{2\}$ and $S_2^\checkmark=\{1\}$. There exists a single minimal cover, $(1, 2)$, and the corresponding extended vector is then $(\ominus -10, \ominus -9, \ominus -7, -11)^{\mathsf T}$. This vector is clearly a minimal solution, as discarding any component would no longer satisfy the symmetrized system. 

\end{example}

The next example shows how Algorithm~\ref{alg: minimal solutions super} finds all minimal solutions of the supertropical system.

\begin{example} [Minimal modulus solutions of the supertropical system]
Consider the system
$$
\left(\begin{array}{ccc}
4 & 5 &  6 \\
4 & 2 & 2 \\
2 & 5 & 6 \\
\end{array}\right) \otimes\left(\begin{array}{l}
x_1 \\
x_2 \\
x_3
\end{array}\right)=\left(\begin{array}{c}
0^\circ \\
 0^\circ \\
0
\end{array}\right).
$$
The candidate greatest modulus solution is $\bar x_{\sup}=(-4^{\circ},-5,-6)^{\mathsf T}$.
We then use the sets $S_1(|\bar x_{\sup}|)=\{1,2\}$, $S_2(|\bar x_{\sup}|)=\{1,3\}$, and $S_3(|\bar x_{\sup}|)=\{1,3\}$ to find all minimal covers of $[m]=[3]$. There are two minimal covers, $(1, 2)$ and $(1, 3)$, which correspond to the vectors $d_1 = (-4^\circ,-5, \szero)^{\mathsf T}$ and $d_2 = (-4^\circ, \szero,-6)^{\mathsf T}$ respectively. These vectors are solutions and their minimality can be easily verified, since discarding any component would no longer satisfy the supertropical system. Therefore, the system is clearly solvable, as Algorithm~\ref{alg: minimal solutions super} produces minimal solutions.
\end{example}

The following example shows the insolvability of the supertropical system, since Algorithm~\ref{alg: minimal solutions super} returns no output.

\begin{example}[Insolvability of the supertropical system]
Consider the system
$$
\left(\begin{array}{cccc}
16 &  15 & -5 &  17 \\
 17 & 16 & 4 &  18 \\
-6 & 1 & 10 & -3 \\
7 & -7 & 6 & 13 \\
 18 &  17 & 0 & 19 
\end{array}\right) \otimes\left(\begin{array}{l}
x_1 \\
x_2 \\
x_3 \\
x_4
\end{array}\right)=\left(\begin{array}{c}
6^{\circ} \\
7^{\circ} \\
 3 \\
2 \\
8
\end{array}\right).
$$
The candidate greatest modulus solution is $\bar x_{\sup}=(-10,-9,-7,-11)^{\mathsf T}$.
We use the sets $S_1(|\bar x_{\sup}|)=\{1,2,5\}$, $S_2(|\bar x_{\sup}|)=\{1,2,5\}$, $S_3(|\bar x_{\sup}|)=\{3\}$ and $S_4(|\bar x_{\sup}|)=\{1,2,4,5\}$ to find all minimal covers of $[m]=[5]$. There is a unique minimal cover in this case, namely $(3, 4)$, which corresponds to the vector $d = (\szero, \szero, - 7, -11)^{\mathsf T}$. This vector does not satisfy the supertropical system, as the first and second equations remain unsatisfied. We then need to find all minimal covers of $\widetilde M=\{1,2\}$ using $S_1(|\bar x_{\sup}|)=\{1,2,5\}$ and $S_2(|\bar x_{\sup}|)=\{1,2,5\}$. There exist two minimal covers, $(1)$ and $(2)$, and the corresponding extended vectors are then $(-10, \szero, - 7, -11)^{\mathsf T}$ and $(\szero, -9, - 7, -11)^{\mathsf T}$ respectively. These vectors do not satisfy the supertropical system, demonstrating that the supertropical system is unsolvable.
\end{example}

Before the main work to prove the validity of Algorithms~\ref{alg: minimal solutions symm} and \ref{alg: minimal solutions super} we give some argument for the minimality criterion used in the end of these algorithms to eliminate non-minimal solutions.
Recall that $\widetilde S_j$ denotes the intersection of $S_j(\bar x)$ with the set of all indices of signed or tangible components of $b$.

\begin{proposition}\label{prop:minimal version}
Let $x$ be a solution to system $A\otimes x=b$ over symmetrized or, respectively, supertropical semiring and suppose that, for any $j\in [n]$, if $\widetilde S_j=\emptyset$ then either $|x_j|<\bar {x}_j$ or $x_j$ is balanced or, respectively, a ghost.  Under this condition, if $x$ is not a minimal modulus solution, then there is a solution $x'$ for which $x'_k=x_k$ for all but one component $k$ for which $x'_k=\szero\neq x_k$.
\end{proposition}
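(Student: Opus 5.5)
The approach is to take a non-minimal solution, pass to a ``cleaned'' smaller solution $y'$ whose finite components all sit at the tropical level $\bar x$, and then show that deleting from $x$ a single component $k$ not used by $y'$ preserves every equation. Throughout I use that $|x|$ solves the tropical system $|A|\otimes z=|b|$, so $|x|\le\bar x$.

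\emph{Reduction to components at level $\bar x$.} If some finite $x_k$ has $|x_k|<\bar x_k$, then $|A_{ik}|\otimes|x_k|<|b_i|$ for every $i$. Such a term never attains the maximal modulus in any row, so setting $x_k\to\szero$ gives the required $x'$. Hence I may assume $|x_j|=\bar x_j$ for every finite $x_j$. Under this assumption, the hypothesis says that $x_j$ is balanced (respectively, a ghost) whenever $\widetilde S_j=\emptyset$.

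\emph{Constructing $y'$ and choosing $k$.} Since $x$ is not minimal, there is a solution $y$ with $|y|\le|x|$ and $|y|\ne|x|$. Let $y'$ be $y$ with every component satisfying $|y_j|<\bar x_j$ set to $\szero$. By the argument of the previous step $y'$ is still a solution, and every finite $y'_j$ has $|y'_j|=\bar x_j=|x_j|$. Choose $k$ with $|y_k|<|x_k|$. Then $x_k$ is finite and $y'_k=\szero$. Define $x'$ by $x'_k=\szero$ and $x'_j=x_j$ for $j\ne k$. The support of $y'$ is then contained in the support of $x'$.

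\emph{Comparing $x$ and $y'$ on common components.} Let $j$ be in the support of $y'$. If $\widetilde S_j\neq\emptyset$, I claim that $x_j=y'_j$. Take a signed (respectively tangible) row $i\in\widetilde S_j$. In the symmetrized case $A_{ij}$ cannot be balanced, and each of $x_j,y'_j$ must be signed with the sign forced by $\operatorname{sign}(b_i)$ and $\operatorname{sign}(A_{ij})$. In the supertropical case $A_{ij}$, $x_j$ and $y'_j$ must all be tangible, as in the proof of Theorem~\ref{theorem: sup greatest solution}. If instead $\widetilde S_j=\emptyset$, then $x_j$ is balanced (respectively a ghost) by the assumption above.

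\emph{Row-by-row check of $x'$.} In every row, all terms of $x'$ have modulus at most $|b_i|$, because $|x'|\le\bar x$.

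For a signed or tangible row $i$, every top term of $y'$ or $x'$ comes from some $j$ with $i\in S_j(\bar x)$. For such $j$ we have $\widetilde S_j\ni i$, so on common components these terms coincide in $x'$ and $y'$.
\begin{itemize}
\item In the symmetrized case, $x$ already requires all its top terms in row $i$ to be equal to $b_i$. Row $i$ of $y'$ supplies at least one such top term, and this term survives in $x'$. Therefore row $i$ of $x'$ still sums to $b_i$.
\item In the supertropical case, row $i$ of $x$ has exactly one top term. That term equals $y'$'s unique top term in row $i$, which lies in the support of $x'$, so it is not the deleted $k$-term.
\end{itemize}

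For a balanced or ghost row $i$, row $i$ of $y'$ is balanced (respectively ghost) at level $|b_i|$ for one of two reasons.
\begin{itemize}
\item It contains a balanced/ghost top term from some $j$. That is, either $A_{ij}$ is balanced/ghost, or $y'_j$ is balanced/ghost, which forces $\widetilde S_j=\emptyset$. In both cases the $j$-th term of $x'$ is also balanced/ghost.
\item It contains two top terms from $j\ne l$ that cancel: opposite signs in the symmetrized case, or two tangibles in the supertropical case. By the comparison above, each of $x'_j,x'_l$ either equals $y'_j,y'_l$ or is balanced/ghost. In every combination the sum in row $i$ of $x'$ is again balanced/ghost at level $|b_i|$.
\end{itemize}
Extra top terms of $x'$ cannot undo balance or ghost status, since adding anything of modulus at most $|b_i|$ to a balanced/ghost value at level $|b_i|$ leaves it unchanged.

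\emph{Expected main obstacle.} The delicate step is the comparison $x_j=y'_j$ when $\widetilde S_j\ne\emptyset$. There I must use carefully that a component at full level $\bar x_j$ cannot meet a balanced or ghost $A_{ij}$ in a signed or tangible row. This is exactly where the hypothesis of the proposition and the construction of Algorithms~\ref{alg: x_sym} and~\ref{alg: x_sup} enter.
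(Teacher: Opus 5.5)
Your proof is correct and follows essentially the same route as the paper. A strictly smaller solution, with its sub-level components switched off to $\szero$, serves as a witness, and the same three-way row analysis (signed/tangible rows; balanced/ghost rows with a balanced/ghost top term; balanced/ghost rows with two cancelling top terms) shows that deleting a single component $k$ is harmless, which is exactly the paper's ``restore all but one component of $z$'' step. Your preliminary reduction to components at level $\bar x$ is a harmless tidy-up, and the closing remark invoking the algorithms is unnecessary, since the comparison $x_j=y'_j$ uses only that $x$ and $y'$ are both solutions.
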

\begin{proof}
    If $x$ is not a minimal modulus solution, then there exists a solution $y$ such that $y\neq x$ and $|y|\leq |x|$. For each component $y_j$ where $|y_j|<|x_j|$ we can assume without loss of generality that $y_j=\szero$ since we have $|A_{ij}|\otimes |y_j|<|b_i|$ for all such $j$ and all $i$.  Let us also define $z$ by 
    \begin{equation}
    \label{eq:z}
       z_j=
       \begin{cases}
           \szero, & \text{if $y_j=\szero$},\\
           x_j, &\text{otherwise}.
       \end{cases}
    \end{equation}
    We will now show that, both in the symmetrized and in the supertropical case we can restore all but one of the components of $z$ to equal the corresponding components of $x$ without violating any equation of the symmetrized/supertropical system.


In the case of {\bf supertropical semiring}, we will consider the following cases:\\
    \textbf{Case 1:} tangible $b_i$: there is only one term  $A_{ij}\otimes x_j$ where $|A_{ij}\otimes  x_j|=|b_i|$, and for this term we necessarily have $A_{ij}\otimes y_j=A_{ij} \otimes z_j=b_i$, with it also being the only term where $|A_{ij}\otimes  z_j|=|b_i|$. This will not change if we restore some of the components of $z$ to $x_j$. \\
    \textbf{Case 2:} $b_i$ is a ghost, but there are no $j$ such that $A_{ij}\otimes y_j$ is a ghost and $|A_{ij}\otimes y_j|=|b_i|$. Then there should be at least two terms such that $|A_{ij}\otimes y_j|=|A_{ik}\otimes y_k|=|b_i|$ implying that the same holds for $z$ defined in~\eqref{eq:z}, so it satisfies the $i$th equation and this does not change if some components of $z$ are restored to $x_j$.\\
    \textbf{Case 3:} $b_i$ is a ghost and there is $j$ such that $A_{ij}\otimes y_j$ is a ghost and $|A_{ij}\otimes y_j|=|b_i|$. Then $A_{ij}\otimes x_j=A_{ij}\otimes z_j$ is also a ghost, since either $A_{ij}$ is a ghost or $y_j$ is a ghost implying that $\widetilde S_j=\emptyset$ and hence $x_j=z_j$ is also a ghost. Together with $|A_{ij}\otimes z_j|=|b_i|$ 
    this implies that $z$ defined in~\eqref{eq:z} satisfies the $i$th equation in this case and this does not change if some components of $z$ are restored to $x_j$.

In the case of {\bf symmetrized semiring}, we will consider the following similar cases:\\
    \textbf{Case 1:} signed $b_i$: for all indices $j$ such that $|A_{ij}\otimes  x_j|=|b_i|$, $A_{ij}\otimes x_j$ and $b_i$ have the same sign. Further, there are $j$ such that $|A_{ij}\otimes  y_j|=|b_i|$, and the signs of $A_{ij}\otimes y_j$ and $b_i$ are the same for all these terms, also implying $x_j=y_j=z_j$ for all such $j$. If we restore some components of $z$ to $x_j$, the signs of $A_{ij}\otimes x_j$ and $b_i$ will remain the same for all $j$ such that $|A_{ij}\otimes  z_j|=|b_i|$. \\
    \textbf{Case 2:} $b_i$ is balanced, but there are no $j$ such that $A_{ij}\otimes y_j$ is balanced and $|A_{ij}\otimes y_j|=|b_i|$. Then there should be at least two such terms $A_{ij}\otimes y_j$ and $A_{ik}\otimes y_k$ with opposite signs and moduli equal to $|b_i|$. Then $A_{ij}\otimes x_j$ and $A_{ik}\otimes x_k$ also have moduli equal to $|b_i|$. If $\widetilde S_j=\emptyset$ or $\widetilde S_k=\emptyset$ then $x_j=z_j$ or $x_k=z_k$ are balanced. If $\widetilde S_j\neq\emptyset$ and $\widetilde S_k\neq\emptyset$ then the signs of $y_j$ and $x_j=z_j$
    are the same and the signs of $y_k$ and $x_k=z_k$ are the same, determined by the signed equations whose indices appear in $\widetilde S_j$ and $\widetilde S_k$. In any case, $z$ satisfies the $i$th equation and this does not change if some components of $z$ are restored to $x_j$.  
    \\
    \textbf{Case 3:} $b_i$ is balanced and there is $j$ such that $A_{ij}\otimes y_j$ is balanced and $|A_{ij}\otimes y_j|=|b_i|$. The proof in this case follows that for Case 3 of the supertropical semiring case.

\end{proof}

The following example confirms the necessity of the above condition.

\begin{example}
Consider the system consisting of just one equation.
$$
0 \otimes x_1 \;\oplus\; (\ominus 0) \otimes x_2 \;=\; 0^\bullet.
$$
Take a solution 
$$
x^{(1)} = (0,0)^{\mathsf T}.
$$
For this example $\widetilde{S}_1=\widetilde{S}_2=\emptyset$ and none of the components of $x^{(1)}$ are balanced, which violates the conditions of Proposition~\ref{prop:minimal version}.
Note that no component can be removed from \(x^{(1)}\) while still preserving it as a solution. In this case the minimal modulus solutions are
$$
x^{(2)} = (0^\bullet, \szero)^{\mathsf T}, \quad x^{(3)} = (\szero, 0^\bullet)^{\mathsf T}.
$$
\end{example}

The next proposition confirms that if $A\otimes x=b$ is solvable, then minimal modulus solutions exist: this holds for any layered tropical semiring. 

\begin{proposition}\label{proposition: bounded down}
    Consider a system $A \otimes x=b$ over a layered tropical semiring. For any solution $x$ to this system there exists a minimal modulus solution $d$ such that $|d|\leq |x|$.
\end{proposition}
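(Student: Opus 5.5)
The plan is to argue by a well-founded descent on the modulus vectors of solutions lying below $|x|$. Let $\mathcal{S}$ be the solution set and put $\mathcal{S}_x=\{y\in\mathcal{S}\colon |y|\le |x|\}$, which is nonempty because it contains $x$. Any minimal element of $\mathcal{S}_x$, with respect to the preorder induced by the componentwise order on moduli, is a minimal modulus solution of the whole system. Indeed, if $y\in\mathcal{S}$ satisfied $|y|\le|d|$ and $|y|\ne|d|$, then $|y|\le|x|$, so $y\in\mathcal{S}_x$, contradicting the minimality of $d$ in $\mathcal{S}_x$. It therefore suffices to show that $\mathcal{S}_x$ has a minimal element.

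Since the modulus values are in general integers, or reals in $\mathbb{R}_{\max}$, there is no immediate finiteness, so the first step is a reduction. Let $y\in\mathcal{S}$ and $j\in[n]$, and suppose $|A_{ij}|\otimes|y_j|<|b_i|$ for every $i$. In a layered semiring, a term whose modulus is strictly below the maximal modulus in a sum is absorbed: by the definition of $\oplus$, the smaller layer disappears. Hence replacing $y_j$ by $\szero$ keeps $y$ a solution, and it only decreases $|y|$. Conversely, a component with $|A_{ij}|\otimes|y_j|=|b_i|$ for some $i$ forces $|y_j|=|b_i|-|A_{ij}|$. Here $|y_j|\le \bar x_j$, because $|y|$ solves the tropical system $|A|\otimes z=|b|$, and $\bar x$ is its greatest solution.

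Starting from $x$, I would set to $\szero$ every component that attains no equation. This gives a solution $x^{0}\in\mathcal{S}_x$ whose every finite coordinate lies in the finite set
\[
V_j=\{\,|b_i|-|A_{ij}|\colon i\in[m],\ A_{ij}\neq\szero\,\},
\]
and whose layer lies in the finite semiring $T$ (which is $\mathbb{B}_s$ or $\mathbb{N}_2$; in general I would only need that the modulus vector set is finite). The same normalisation applies to every element of $\mathcal{S}_x$. Every $y\in\mathcal{S}_x$ can be normalised to some $y'\in\mathcal{S}_x$ with $|y'|\le|y|$ and $|y'_j|\in V_j\cup\{-\infty\}$.

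Consequently the set $\{|y'|\}$ of moduli of normalised elements of $\mathcal{S}_x$ is a nonempty finite subset of $\prod_j (V_j\cup\{-\infty\})$. I would pick a normalised $d$ whose modulus is minimal in this finite poset; such a $d$ exists because the poset is finite and nonempty. Then $d$ is minimal in $\mathcal{S}_x$: if some $y\in\mathcal{S}_x$ had $|y|\le|d|$ with $|y|\ne|d|$, its normalisation $y'$ would satisfy $|y'|\le|y|\le|d|$ with $|y'|\neq|d|$, contradicting the choice of $d$. By the first paragraph, $d$ is the required minimal modulus solution with $|d|\le|x|$.

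The main obstacle is the absorption step. I need to justify carefully, from the layered addition rule, that a term of strictly smaller modulus never affects the sum. The case where it ties with other terms is excluded by the hypothesis, because the total has modulus $|b_i|$, which exceeds the term's modulus. This is also the only place where the layered structure is used.
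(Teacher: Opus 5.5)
Your proof is correct. It rests on the same key fact as the paper's: a coordinate all of whose terms satisfy $|A_{ij}|\otimes|y_j|<|b_i|$ can be switched to $\szero$ without breaking any equation. Where you differ is in how you get finiteness.

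The paper runs a descent. If $x$ is not minimal, it takes a solution $x'$ with $|x'|\le|x|$ and $|x'|\neq|x|$, switches off every coordinate whose modulus dropped, and repeats. Each step strictly shrinks the set of finite coordinates, so the process stops after at most $n$ steps; the paper leaves this termination reason implicit.

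You instead normalise all of $\mathcal{S}_x$ into the finite box $\prod_j(V_j\cup\{-\infty\})$ and take a minimal element of a finite poset. Before that, you check that minimality inside $\mathcal{S}_x$ gives minimality in the whole solution set.

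Your box is larger than it needs to be. Take $y\in\mathcal{S}_x$ with $|y_j|<|x_j|$. Then $|A_{ij}|\otimes|y_j|<|b_i|$ for every $i$: when $A_{ij}\neq\szero$ it lies strictly below $|A_{ij}|\otimes|x_j|\le|b_i|$, and otherwise it equals $-\infty$. So your normalisation always switches such a $y_j$ off. The normalised moduli therefore lie in $\prod_j\{|x_j|,-\infty\}$, which is exactly the space the paper's descent moves through.

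This shows that both arguments actually give a sharper conclusion: $|d|$ can be taken to be $|x|$ with some coordinates switched off. The paper's route reaches this more quickly. Yours makes the finiteness, and the reduction from $\mathcal{S}_x$ to the full solution set, fully explicit.
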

\begin{proof}
If $x$ is not a minimal modulus solution, then there exists a solution $x'$ such that $x'\neq x$ and $|x'|\leq |x|$. For each component $x'_j$ where $|x'_j|<|x_j|$ we can assume without loss of generality that $x'_j=\mathbf{0}$ since we have $|A_{ij}|\otimes |x'_j|<|b_i|$ for all such $j$ and all $i$.  If $x'$ is not a minimal modulus solution then we can apply the same argument repeatedly, until we obtain a minimal modulus solution after a finite number of steps.
\end{proof}

We next prove that Algorithm~\ref{alg: minimal solutions symm} and Algorithm~\ref{alg: minimal solutions super} find all minimal modulus solutions if the system $A\otimes x=b$ is solvable. 
\begin{theorem}\label{theorem: minimal solution has k'+k''}
    For any minimal solution $x$ of the symmetrized system there is a minimal solution $d$ found by Algorithm~\ref{alg: minimal solutions symm} such that $|x|=|d|$. In other words, any minimal solution $x$ corresponds to a cover of the form $K' \cup K''$, where $K'$ is a minimal cover of $[m]$ using $S_j(|\bar x_{\sym}|)$ and $K''$ is a minimal cover of $\widetilde M=\{i \in Balanced\_equations:\bigoplus_{j=1}^{n} d_j \otimes A_{ij} \neq b_i\}$ using $S_j^\checkmark$ for all $j \not \in K'$ (where $S_j^\checkmark$ is defined as in Algorithm~\ref{alg: minimal solutions symm}). 
\end{theorem}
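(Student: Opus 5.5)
Let $x$ be a minimal modulus solution of the symmetrized system. The plan is to move from $x$ to a solution $d$ with the same modulus whose finite components equal those of $\bar x_{\sym}$, and then read off the two covers from the support of $d$.

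\textbf{Step 1.} First show $|x_j|\in\{|\bar x_{\sym,j}|,-\infty\}$ for every $j$. By Theorem~\ref{theorem: sym greatest solution} we have $|x|\le|\bar x_{\sym}|$. If $|x_j|<|\bar x_{\sym,j}|$ with $x_j$ finite, the argument in that proof gives $|A_{ij}|\otimes|x_j|<|b_i|$ for all $i$: either $|x_j|<\bar x_j$, or we are in the mixed-sign case where modulus $\bar x_j$ is impossible. Then $x_j$ contributes to no equation. Setting $x_j=\szero$ gives a solution of strictly smaller modulus, which contradicts minimality. So the support $K=\{j: x_j\neq\szero\}$ determines $|x|$.

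Next replace $x_j$ by $\bar x_{\sym,j}$ on $K$, following the case analysis of Theorem~\ref{theorem: sym greatest solution}. When $\widetilde S_j\neq\emptyset$ and the signs are consistent, the sign is forced, so $x_j=\bar x_{\sym,j}$. When $\widetilde S_j=\emptyset$, replacing $x_j$ by $\bar x_j^\bullet$ keeps all equations true, because the rows it touches at full modulus are balanced. Call the result $d$; it is a solution with $|d|=|x|$, and it is still minimal modulus since minimality depends only on moduli.

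\textbf{Step 2.} Since $|d|$ solves $|A|\otimes z=|b|$, the sets $S_j(|\bar x_{\sym}|)$ with $j\in K$ cover $[m]$. Choose a minimal subcover $K'\subseteq K$, so that $K'$ is one of the covers enumerated in the first loop of Algorithm~\ref{alg: minimal solutions symm}, and let $d'$ be the truncation of $d$ to $K'$.

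Check which equations can fail for $d'$. The signed rows cannot fail. Every full-modulus term of $d$ in a signed row already has the sign of $b_i$, since $d$ is a solution and signs of equal-modulus terms must agree. Deleting terms leaves at least one of them, so the row stays correct. Hence the failing rows form $\widetilde M\subseteq$ Balanced\_equations, and each failing row $i$ has only signed full-modulus terms of a single common sign $\sigma_i=\operatorname{sign}(\bigoplus_{l\in K'}d_l\otimes A_{il})$.

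For $i\in\widetilde M$, $d$ satisfies row $i$ only if some $j\in K\setminus K'$ has full modulus in row $i$ and $d_j\otimes A_{ij}$ is balanced or has sign $\ominus\sigma_i$. The first possibility covers $\widetilde S_j=\emptyset$ (then $d_j$ is balanced) and balanced $A_{ij}$; the second is exactly the signed clause in the definition of $S_j^\checkmark$. Thus $\{S_j^\checkmark\}_{j\in K\setminus K'}$ covers $\widetilde M$; pick a minimal subcover $K''$.

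\textbf{Step 3.} The algorithm builds, from $K'$ and $K''$, the vector $\hat d$ equal to $\bar x_{\sym}$ on $K'\cup K''$ and $\szero$ elsewhere. Check that $\hat d$ is a solution:
\begin{itemize}
\item signed rows hold by the sign-agreement argument of Step 2;
\item balanced rows outside $\widetilde M$ hold already under $d'$, and adding more full-modulus terms to a balanced value keeps it balanced;
\item rows in $\widetilde M$ become balanced through the $K''$ terms.
\end{itemize}
Since $|\hat d|\le|d|$ and $d$ is minimal, we get $|\hat d|=|d|$, that is, $K'\cup K''=K$. So $\hat d$ survives the final discarding step of the algorithm, and $|x|=|\hat d|$.

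The main obstacle is Step 2: showing that each balanced row that $d$ satisfies but $d'$ does not is balanced by a column in $S_j^\checkmark$. This requires that the sign $\sigma_i$ is well defined, and that $d_j=\bar x_{\sym,j}$ carries exactly the sign used in the definition of $S_j^\checkmark$. I would settle this first by the explicit sign case split, using the fact that signed rows of $d$ have uniform sign.
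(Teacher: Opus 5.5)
Your proof is correct and follows essentially the same route as the paper: take the support cover $K$, extract a minimal subcover $K'$, show the signed rows survive truncation, show $\widetilde M$ is covered by the sets $S_j^\checkmark$ with $j\in K\setminus K'$, take a minimal subcover $K''$, and conclude $K'\cup K''=K$ by minimality. The differences are only presentational: you first normalize $x$ to an equal-modulus solution that agrees with $\bar x_{\sym}$ on its support, whereas the paper handles the signed-versus-balanced discrepancy for $\widetilde S_j=\emptyset$ inline. You also prove the covering of $\widetilde M$ directly, where the paper argues by contradiction.
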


\begin{proof}
    Our goal is to show that for any minimal modulus solution $x$, there exist a solution $d$ generated by Algorithm~\ref{alg: minimal solutions symm} such that $|d| \leq |x|$ and hence $|d|=|x|$. Since $x$ is a solution to the symmetrized system, this implies that $|x|$ is a solution to the tropical system $|A| \otimes y=|b|$, which means $|x|$ corresponds to a cover $K$ of $[m]$ constructed using $S_j(\bar x)=\{i \in [m]: \bar x_j \otimes |A_{ij}|=|b_i|\}$ excluding some $j \in [n]$ where $\widetilde S_j \neq \emptyset$ and for which either $A_{ij}$ is balanced for some $i \in \widetilde S_j$ or neither sign($b_i)=$ sign($A_{ij}$) holds for all $i\in \widetilde S_j$ nor sign($b_i)$ $=$ $\ominus$ sign($A_{ij}$) holds for all $i \in \widetilde S_j$. This is because for such components, we have 
    $|x_j| <\bar x_j$. If it were not the case, namely if $|x_j|=\bar x_j$ for such components, $x$ would not have been a solution.
    
    Therefore, this can be equivalently described by saying that the cover $K$ corresponding to $|x|$ is constructed using $S_j(|\bar x_{\sym}|)$, as $S_j(|\bar x_{\sym}|)=\emptyset$ precisely for $j$ where $|\bar x_{\sym,j}|=\bar x_j-1$ and these are the same components for which the above condition holds, that is, $\widetilde S_j \neq \emptyset$ and either there is a balanced $A_{ij}$ for some $i$ or neither sign($b_i)=$ sign($A_{ij}$) for all $i\in\widetilde S_j$ nor sign($b_i)$ $=$ $\ominus$ sign($A_{ij}$) for all $i \in \widetilde S_j$.
    
We also observe that $x_j=\bar{x}_{\sym,j}$ (both of them signed) for $j\in K$ and $\widetilde{S}_j\neq\emptyset$, while for $j\in K$ and $\widetilde{S}_j=\emptyset$
    we either also have $x_j=\bar{x}_{\sym,j}$ (both of them balanced) or
$|x_j|=|\bar{x}_{\sym,j}|$, $x_j$ signed and $\bar{x}_{\sym,j}$ balanced. In all of these cases, $|x_j|=|\bar x_{\sym,j}|=\bar x_j$.

Now, we can find a minimal cover $K'\subseteq K$ and following Algorithm~\ref{alg: minimal solutions symm} line 8 define $d$ by $d_j=\bar x_{\sym,j}$ for all $j\in K'$ and $d_j=\szero$ for $j\notin K'$. Then $d$ satisfies all signed equations due to the following argument. 


 Since $x$ is a solution, it satisfies $\bigoplus_{j \in K} x_j \otimes A_{ij}=b_i$ for every signed equation $i$. We can further simplify this expression by discarding the terms where $|x_j| \otimes |A_{ij}| < |b_i|$ yielding 
    \begin{equation}\label{eq:summation}
        \bigoplus_{\substack{j:j \in K, \hspace{0.05 cm} i \in \widetilde S_j}} x_j \otimes A_{ij} = b_i .
    \end{equation}
    In this refined summation, all remaining terms share the same absolute value and sign as $b_i$. Additionally, since $K'$ is also a cover of $[m]$ using $S_j(\bar x)$ and since $|d_j|=|x_j|$ for $j\in K'$, it follows that $|d|$ satisfies the tropical system. Therefore, $\bigoplus_{j \in K'} |d_j| \otimes |A_{ij}|=|b_i|$ is satisfied for every signed equation $i$, or equivalently 
    \begin{equation}
        \bigoplus_{\substack{j:j \in K', \hspace{0.05 cm} i \in\widetilde S_j}} |d_j| \otimes |A_{ij}| = |b_i| .
    \end{equation}
    after discarding all the terms with absolute values smaller than $|b_i|$. Since $K'$ is a subset of $K$ and since $d_j=x_j$ when $\widetilde S_j\neq \emptyset$ and $j\in K'$, the terms in this summation form a subset of those in summation~\eqref{eq:summation}, which implies $\bigoplus_{\substack{j: j \in K', \hspace{0.05 cm} i \in \widetilde S_j}} d_j \otimes A_{ij} = b_i$ is also satisfied. This implies that all signed equations are satisfied by $d$.\\

It is not necessarily guaranteed that the balanced equations are satisfied by $d$: in some of these equations the sum $\bigoplus_{j\in K'} d_j\otimes A_{ij}$ has the same absolute value as $b_i$, but it is signed while $b_i$ is balanced. The unsatisfied balanced equations form the set $\widetilde{M}$ defined in Algorithm \ref{alg: minimal solutions symm}. 

Let us prove that $\widetilde{M}$ is covered by sets $S_j^{\checkmark}$  for $j\in K\backslash K'$, where $S_j^{\checkmark}$ are defined in Algorithm \ref{alg: minimal solutions symm}. By contradiction, suppose that there is $i\in \widetilde{M}$ not covered by $S_j^{\checkmark}$ for all $j\in K\backslash K'$. But then, following the definition of these sets given in Algorithm~\ref{alg: minimal solutions symm} line 10, for any $j\in K\backslash K'$ we cannot have  $|x_j|\otimes |A_{ij}|=|b_i|$ when $\widetilde S_j=\emptyset$ or $\widetilde S_j\neq\emptyset$ but $A_{ij}$ is balanced (also since $|x_j|=|\bar x_{\sym,j}|$ for all $j\in K$), and neither we can have $x_j\otimes A_{ij}=\ominus\operatorname{sign} (\bigoplus_{l\in K'} x_l\otimes A_{il})|b_i|$ when $\widetilde S_j\neq\emptyset$ and $A_{ij}$ is signed: if this holds then, while $x_j\otimes A_{ij}=\bar x_{\sym,j}\otimes A_{ij}$ in this case, $\bigoplus_{l\in K'} \bar x_{\sym,l}\otimes A_{il}$ is different in sign and hence balanced, contradicting the definition of $\widetilde M$. The above analysis shows that $x$ cannot satisfy the $i$th equation of $A\otimes x=b$, a contradiction.\\

We now take a minimal cover $K''\subseteq K\backslash K'$ of $\widetilde M$ by the sets $S_j^{\checkmark}$ and define $d_j=\bar x_{\sym,j}$  for all $j\in K''$ following Algorithm \ref{alg: minimal solutions symm} line 12. 

The resulting (modified) vector $d$ is also a solution. Indeed, $|d|$ is a solution of the tropical system. Furthermore, signed equations are satisfied since $|d|\leq |x|$ and $x_j=\bar x_{\sym,j}=d_j$ for $j\in K'\cup K''$ and $\widetilde{S}_j\neq\emptyset$, and balanced equations with indices not in $\widetilde M$ are satisfied since $|d|\leq |x|$ and since they were already satisfied by $d$ before the modification, i.e., by $d$ that was defined using the cover $K'$. As for any equation with index $i\in \widetilde{M}$, following Algorithm~\ref{alg: minimal solutions symm} line 10 we see that either 1) there exists $j\in K''$ such that $d_j\otimes A_{ij}= b_i$ and either $\widetilde S_j=\emptyset$ (hence $d_j=\bar x_{\sym,j}$ is balanced) or $\widetilde S_j\neq\emptyset$ and $A_{ij}$ is balanced, or 2)
there exists $j\in K''$ with $\widetilde S_j\neq\emptyset$ and signed $A_{ij}$ such that $|d_j\otimes A_{ij}|=|b_i|$ and the sign of $d_j\otimes A_{ij}$ is opposite to the sign of $\bigoplus_{l\in K'} d_l\otimes A_{il}$. In both cases the left hand side of the $i$th equation of $A\otimes d=b$ is balanced meaning that it also holds.

It then follows that for any solution $x$, we have $|d| \leq |x|$ for some $d$ generated by Algorithm~\ref{alg: minimal solutions symm} before line 13. However, $x$ is a minimal modulus solution and hence we should have $|d|=|x|$, thus $d$ is also minimal and is not excluded in line 13 of Algorithm~\ref{alg: minimal solutions symm}.

\if{    Now, we can find a minimal cover $K'\subseteq K$ and define the corresponding vector $x'$ by $x'_j=x_j$ for $j \in K'$ and $x'_j=\szero$ for $j \notin K'$. This vector $x'$ satisfies all signed equations due to the following argument. Since $x$ is a solution, it satisfies $\bigoplus_{j \in K} x_j \otimes A_{ij}=b_i$ for every signed equation $i$. We can further simplify this expression by discarding the terms where $|x_j| \otimes |A_{ij}| < |b_i|$ yielding 
    \begin{equation}\label{eq:summation}
        \bigoplus_{\substack{j:j \in K, \hspace{0.05 cm} i \in S_j}} x_j \otimes A_{ij} = b_i .
    \end{equation}
    In this refined summation, all remaining terms share the same absolute value and sign as $b_i$. Additionally, since $K'$ is also a cover of $[m]$ using $S_j$, it follows that $|x'|$ satisfies the tropical system. Therefore, $\bigoplus_{j \in K'} |x'_j| \otimes |A_{ij}|=|b_i|$ is satisfied for every signed equation $i$, or equivalently 
    \begin{equation}
        \bigoplus_{\substack{j:j \in K', \hspace{0.05 cm} i \in S_j}} |x'_j| \otimes |A_{ij}| = |b_i| .
    \end{equation}
    after discarding all the terms with absolute values smaller than $|b_i|$. Since $K'$ is a subset of $K$, the terms in this summation form a subset of those in summation~\eqref{eq:summation}, which implies $\bigoplus_{\substack{j: j \in K', \hspace{0.05 cm} i \in S_j}} x'_j \otimes A_{ij} = b_i$ is also satisfied. This implies that all signed equations are satisfied.\\

It is not necessarily guaranteed that the balanced equations are satisfied by $x'$: in some of these equations the sum $\bigoplus_{j\in K'} x_j\otimes A_{ij}$ has the same absolute value as $b_i$, but it is signed while $b_i$ is balanced. The unsatisfied balanced equations form the set $\widetilde{M}$ defined in Algorithm \ref{alg: minimal solutions symm}. 

Let us prove that $\widetilde{M}$ is covered by sets $S_j^{\checkmark}$  for $j\in K\backslash K'$, where $S_j^{\checkmark}$ are defined in Algorithm \ref{alg: minimal solutions symm}. Indeed, if $|x|$ solves $|A|\otimes |x|=|b|$ and $K$, $K'$ and $\widetilde{M}$ are as defined above then $x$ solves the symmetrized system if and only if for each $i\in\widetilde{M}$ either 1) there exists $j\in K\backslash K'$ such that $|x_j\otimes A_{ij}|= |b_i|$ and either $A_{ij}$ is balanced or $x_j$ is balanced (but the latter is only possible when $\widetilde{S}_j=\emptyset$ as in the opposite case one of the signed equations would be violated), 2) there exists $k\in K\backslash K'$ such that $|x_k\otimes A_{ik}|= |b_i|$ and $x_k\otimes A_{ik}$ has the opposite sign with respect to $\bigoplus_{j\in K'} x_j\otimes A_{ij}$. 

We then see that for components $x_j$ for $j\in K\backslash K'$ with $\widetilde S_j\neq \emptyset$ the set of $i$ such that the terms $x_j\otimes A_{ij}$ have the sign described in case 2) above or are balanced is precisely $S_j^{\checkmark}$, and for components $x_j$ for $j\in K\backslash K'$ with $\widetilde S_j=\emptyset$ such set is, in general, a subset of $S_j^{\checkmark}$. Since such sets, when taken together, should cover $\widetilde M$ for $x$ to be a solution, the sets $S_j^{\checkmark}$ for $j\in K\backslash K'$ should cover $\widetilde M$ as well.\\ 

We now take a minimal cover $K''\subseteq K\backslash K'$ of $\widetilde M$ by the sets $S_j^{\checkmark}$ and define $d_j$ for all $j$ as in Algorithm \ref{alg: minimal solutions symm}. This is then also a solution. Indeed, all signed equations and balanced equations with indices not in $\widetilde{M}$ are satisfied since $|d|\leq |x|$ and $x'_j=x_{\sym,j}=d_j$ for $j\in K'\cup K''$ and $\widetilde{S}_j\neq\emptyset$,while for $j\in K'\cup K''$ and $\widetilde{S}_j=\emptyset$ we either have that $x'_j=x_{\sym,j}=d_j$ and $x'_j$ is balanced or that $|x'_j|=|x_{\sym,j}|=|d_j|$, $x'_j$ is signed and $d_j$ is balanced. As for the equations with indices in $\widetilde{M}$ one of the cases 1) or 2) holds for each $i\in\widetilde M$.  It then follows that for any solution $x$, we have $|d| \leq |x|$ for some $d$ generated by Algorithm~\ref{alg: minimal solutions symm} before line 13. However, $x$ is minimal and hence we should have $|d|=|x|$, thus $d$ is also minimal and is not excluded in line 13 of Algorithm~\ref{alg: minimal solutions symm}.
}\fi 

    \end{proof}

\begin{theorem}\label{theorem: minimal solution has k'+k'' supertropical}
    For any minimal solution $x$ of the supertropical system there is a minimal solution $d$ found by Algorithm~\ref{alg: minimal solutions super} such that $|x|=|d|$. In other words, any minimal solution $x$ corresponds to a cover of the form $K' \cup K''$, where $K'$ is a minimal cover of $[m]$ using $S_j(|\bar x_{\sup}|)$ and $K''$ is a minimal cover of $\widetilde M=\{i \in Ghost\_equations:\bigoplus_{j=1}^{n} d_j \otimes A_{ij} \neq b_i\}$   using $S_j(|\bar x_{\sup}|)$ for all $j \not \in K'$. 
\end{theorem}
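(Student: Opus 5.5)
We follow the same scheme as the proof of Theorem~\ref{theorem: minimal solution has k'+k''}. Fix a minimal modulus solution $x$. We will construct a vector $d$ produced by Algorithm~\ref{alg: minimal solutions super} (before the final discarding step) that is a solution with $|d|\leq|x|$. Minimality of $x$ then forces $|d|=|x|$, so $d$ is itself minimal and survives the discarding step.

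\textbf{Step 1: identify the cover $K$ of $|x|$.} Since $|x|$ solves $|A|\otimes y=|b|$, it corresponds to a cover $K$ of $[m]$ by the sets $S_j(\bar x)$. By \eqref{e:ineq-useful}, every solution satisfies $|x|\leq|\bar x_{\sup}|$. Moreover, $|\bar x_{\sup,j}|=\bar x_j-1$ exactly when $\widetilde S_j\neq\emptyset$ and some $A_{ij}$ with $i\in\widetilde S_j$ is a ghost, and for such $j$ we have $S_j(|\bar x_{\sup}|)=\emptyset$. Hence the $j$ with $|x_j|=\bar x_j$ are precisely those with $|x_j|=|\bar x_{\sup,j}|$, and $K$ is a cover by the sets $S_j(|\bar x_{\sup}|)$. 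For $j\in K$ we compare $x_j$ with $\bar x_{\sup,j}$:
\begin{itemize}
\item if $\widetilde S_j\neq\emptyset$, then $x_j$ is tangible, since a ghost $x_j$ would ghost a tangible equation; so $x_j=\bar x_{\sup,j}=\bar x_j$;
\item if $\widetilde S_j=\emptyset$, then $\bar x_{\sup,j}=\bar x_j^\circ$, and $x_j$ is either equal to it or is the tangible $\bar x_j$.
\end{itemize}
In both cases $|x_j|=|\bar x_{\sup,j}|$.

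\textbf{Step 2: first-stage cover $K'$.} Choose a minimal cover $K'\subseteq K$ and set $d_j=\bar x_{\sup,j}$ for $j\in K'$, $d_j=\szero$ otherwise. We show that $d$ satisfies every tangible equation $i$. In $x$, exactly one term $A_{ij}\otimes x_j$ reaches modulus $|b_i|$; it is tangible, so $i\in\widetilde S_j$ and $x_j=\bar x_{\sup,j}$. In $d$, the terms reaching modulus $|b_i|$ form a subset of those in $x$, because $K'\subseteq K$ and $|d|\leq|x|$. This subset is nonempty because $K'$ covers $[m]$. Hence it is exactly that single term, and $(A\otimes d)_i=b_i$. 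Ghost equations $i$ are attained in modulus because $K'$ is a cover, but they may evaluate to a tangible value; these unsatisfied ones form $\widetilde M$.

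\textbf{Step 3: second-stage cover $K''$ (main obstacle).} We must show that $\widetilde M$ is covered by $S_j(|\bar x_{\sup}|)$ for $j\in K\setminus K'$, and that adding a minimal subcover $K''$ repairs $\widetilde M$ without breaking anything.

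For the covering claim, take $i\in\widetilde M$. In $d$, the value $(A\otimes d)_i$ is tangible of modulus $|b_i|$, so exactly one tangible term $A_{ik}\otimes d_k$ with $k\in K'$ attains the maximum. Since $x$ satisfies the ghost equation $i$, either there is a second term of $x$ attaining $|b_i|$ or some attaining term is a ghost. The attaining term in $K'$ is tangible in $d$ and equals the corresponding term of $x$ unless $x_k$ is tangible while $\bar x_{\sup,k}$ is a ghost. That last situation would make $d$'s term a ghost and contradict $i\in\widetilde M$. So there is $j\in K\setminus K'$ with $i\in S_j(|\bar x_{\sup}|)$.

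Now pick a minimal cover $K''\subseteq K\setminus K'$ of $\widetilde M$ and set $d_j=\bar x_{\sup,j}$ on $K''$. For every $i\in\widetilde M$, at least two terms now attain $|b_i|$, so the left-hand side is a ghost of the right modulus and the equation holds. Ghost equations outside $\widetilde M$ stay satisfied: once a ghost equation is satisfied, adding further terms of modulus at most $|b_i|$ leaves it satisfied.

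The delicate point is the tangible equations after the enlargement. We still have $|d|\leq|x|$ with $K'\cup K''\subseteq K$. The argument of Step~2 applies verbatim: the attaining terms in $d$ are a nonempty subset of the unique tangible attaining term of $x$, which equals $d$'s term. So tangible equations are still satisfied. This is the step I expect to need the most care, in particular checking that $j\in K''$ with $\widetilde S_j=\emptyset$ contributes nothing to tangible rows.

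Finally, $d$ is a solution with $|d|\leq|x|$, so $|d|=|x|$ by minimality of $x$, and hence $d$ is not discarded.
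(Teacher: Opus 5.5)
Your proof is correct and follows essentially the same route as the paper's: a cover $K$ of $|x|$ by the sets $S_j(|\bar x_{\sup}|)$, a minimal subcover $K'$ that already satisfies every tangible row, an argument that each row of $\widetilde M$ must be hit by some $j\in K\setminus K'$, a minimal subcover $K''$ that supplies a second attaining term in those rows, and minimality of $x$ forcing $|d|=|x|$. Two of your steps are more explicit than the paper's: the covering of $\widetilde M$, where you identify the unique attaining $K'$-term of $d$ with the corresponding term of $x$, and the check of the tangible rows after adding $K''$.
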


\begin{proof}
    As in the proof of Theorem~\ref{theorem: minimal solution has k'+k''}, our goal is to show that for any minimal modulus solution $x$, there exist a solution $d$ generated by Algorithm~\ref{alg: minimal solutions super} such that $|d| \leq |x|$ and hence $|d|=|x|$. Since $x$ is a solution to the supertropical system, this implies that $|x|$ is a solution to the tropical system $|A| \otimes y=|b|$, which means $|x|$ corresponds to a cover $K$ of $[m]$ constructed using $S_j(\bar x)=\{i \in [m]: \bar x_j \otimes |A_{ij}|=|b_i|\}$ excluding some $j \in [n]$ where $\widetilde S_j \neq \emptyset$ and for which $A_{ij}$ is ghost for some $i \in \widetilde S_j$. This is because for such components, we have $S_j(x)=\emptyset$ as $|x_j| <\bar x_j$. If this is not the case, namely if $|x_j|=\bar x_j$ for such components, $x$ will not be a solution.\\
    Therefore, this can be equivalently described by saying that the cover $K$ corresponding to $|x|$ is constructed using $S_j(|\bar x_{\sup}|)$, as $S_j(|\bar x_{\sup}|)=\emptyset$ precisely for $j$ where $|\bar x_{\sup,j}|=\bar x_j-1$ and these are the same components for which the above condition holds, that is, $\widetilde S_j \neq \emptyset$ and there is a ghost $A_{ij}$ for some $i \in \widetilde S_j$.

    We also observe that $x_j=\bar{x}_{\sup,j}$ (both of them tangible) for $j\in K$ and $\widetilde{S}_j\neq\emptyset$, while for $j\in K$ and $\widetilde{S}_j=\emptyset$
    we either also have $x_j=\bar{x}_{\sup,j}$ (both of them ghosts) or
$|x_j|=|\bar{x}_{\sup,j}|$, $x_j$ is tangible and $\bar{x}_{\sup,j}$ is ghost. In all of these cases, $|x_j|=|x_{\sup,j}|=\bar x_j$.

    Now, we can find a minimal cover $K'\subseteq K$ of $[m]$ and define vector $d$ by $d_j=\bar x_{\sup,j}$ for $j\in K'$ and $d_j=\szero$ for $j\notin K'$ following Algorithm~\ref{alg: minimal solutions super}. 
    Then $|d|$ is a solution of the tropical system $|A|\otimes y=|b|$ since $K'$ is a cover, and $|d|\leq |x|$. Furthermore, $d$ satisfies all tangible equations due to the following argument. Take a tangible equation with index $i$. Since $x$ is a solution, there exists a unique component $j \in K$ such that $ |x_j| \otimes |A_{ij}|=|b_i|$ for every tangible equation $i$ and we also have $x_j\otimes A_{ij}=b_i$ and $\widetilde S_j\neq\emptyset$ for such terms. Since $K'$ is a subset of $K$ and a cover, we have $j\in K'$ and $d_j\otimes A_{ij}=b_i$ while $|d_k\otimes A_{ik}|<|b_i|$ for all $k\neq j$. \\
    
    It is not necessarily guaranteed that all ghost equations are satisfied by $d$. The unsatisfied ghost equations form the set $\widetilde{M}$ defined in Algorithm \ref{alg: minimal solutions super}. 
    
    Let us prove that $\widetilde{M}$ is covered by sets $S_j(|\bar x_{\sup}|)$  for $j\in K\backslash K'$. By contradiction, assume that some $i\in\widetilde M$ is not covered by these sets, then there is no $j\in K\backslash K'$ with $|\bar x_{\sup,j}\otimes A_{ij}|=|b_i|$ and hence no $j\in K\backslash K'$ with $|x_j\otimes A_{ij}|=|b_i|$, contradicting that $x$ is a solution: in this case in each equation of $A\otimes d=b$ with $i\in \widetilde M$ there is only one term with $|d_j\otimes A_{ij}| =|b_i|$ and hence in each equation of $A\otimes x=b$ there is only one term with $|x_j\otimes A_{ij}|=|b_i|$.  \\

    \if{
    Indeed, if $|x|$ solves $|A|\otimes |x|=|b|$ and $K$, $K'$ and $\widetilde{M}$ are as defined above then $x$ solves the supertropical system if and only if for each $i\in\widetilde{M}$, there exists $j$ such that $|x_j|\otimes |A_{ij}|= |b_i|$ and $\widetilde{S}_j=\emptyset$ as in the opposite case one of the tangible equations would be violated. For such components, the set of $i$ such that $|x_j| \otimes |A_{ij}|=|b_i|$ is precisely $S_j(|\bar x_{\sup}|)$. Since such sets, when taken together, should cover $\widetilde{M}$ for $x$ to be a solution, the sets $S_j(|\bar x_{\sup}|)$ for $j\in K\backslash K'$ should cover $\widetilde{M}$ as well.
    }\fi 
    
    We now take a minimal cover $K''\subseteq K\backslash K'$ of $\widetilde{M}$ by the sets $S_j(|\bar x_{\sup}|)$ and define $d_j=\bar x_{\sup,j}$ for $j\in K''$ as in Algorithm \ref{alg: minimal solutions super} line 12. 
    
    The resulting (modified) vector $d$ is also a solution. Indeed, $|d|$ is a solution of the tropical system. Furthermore, tangible equations are satisfied since $|d|\leq |x|$ and $x_j=\bar x_{\sup,j}=d_j$ for $j\in K'\cup K''$ and $\widetilde S_j\neq \emptyset$ and ghost equations with indices not in $\widetilde M$ are satisfied since $|d|\leq |x|$ since they were already satisfied by $d$ before the modification, i.e., by $d$ that was defined using the cover $K'$.  As for any equation with index $i\in \widetilde M$, it is satisfied by $d$ since there are at least two terms with absolute value $|b_i|$ one of them $A_{ij}\otimes d_j$ with $j\in K'$ and another $A_{ik}\otimes d_k$ with $k\in K''$. 

  Since $K' \cup K'' \subseteq K$, it follows that for any solution $x$, we have $|d| \leq |x|$ for some solution $d$ generated by Algorithm~\ref{alg: minimal solutions super} before line 13. However, $x$ is minimal and hence we should have $|d|=|x|$, thus $d$ is also minimal and is not excluded in line 14 of Algorithm~\ref{alg: minimal solutions super}.
 
    \end{proof}

\begin{corollary}
Every vector produced by Algorithm~\ref{alg: minimal solutions symm} up to line~12 is a solution of the symmetrized system.
\end{corollary}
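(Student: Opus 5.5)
Let $d$ be produced up to line~12. It arises from a minimal cover $K'$ of $[m]$ by the sets $S_j(|\bar x_{\sym}|)$, possibly extended by a minimal cover $K''$ of $\widetilde M$ by the sets $S_j^{\checkmark}$ with $j\notin K'$. (If $\widetilde M$ is empty, the line~9 check has already succeeded.) The plan is to reproduce the second half of the proof of Theorem~\ref{theorem: minimal solution has k'+k''}, using $\bar x_{\sym}$ in the role of the auxiliary solution $x$. Since the input system is solvable, Theorem~\ref{theorem: sym greatest solution} says $\bar x_{\sym}$ is a solution. For every $j$ we have $d_j\in\{\bar x_{\sym,j},\szero\}$, so $|d|\le|\bar x_{\sym}|\le \bar x$. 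Moreover $K'\subseteq\operatorname{supp}(d)$ is a cover by the sets $S_j(|\bar x_{\sym}|)$. Hence $|d|$ solves $|A|\otimes y=|b|$, and every equation $i$ has at least one term of modulus $|b_i|$.

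First the signed equations. Fix a signed $i$. In $A\otimes\bar x_{\sym}=b$, every term of modulus $|b_i|$ must carry the sign of $b_i$ and must not be balanced; otherwise the sum would be balanced or have the wrong sign. The terms of $A\otimes d$ of modulus $|b_i|$ are a nonempty subset of these, because $d_j=\bar x_{\sym,j}$ on the support of $d$. All other terms have strictly smaller modulus. So the $i$th equation holds for $d$. This applies both to $d$ after line~8 and after line~12.

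Next the balanced equations outside $\widetilde M$. Fix such an $i$. Adding the terms indexed by $K''$ cannot break it. The sum over $K'$ already equals $b_i=|b_i|^\bullet$. Each added term has modulus at most $|b_i|$, and adding anything of modulus at most $|b_i|$ to a balanced element of modulus $|b_i|$ leaves it unchanged.

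Finally the balanced equations in $\widetilde M$; this is the step needing care. Fix $i\in\widetilde M$. By the definition of $\widetilde M$ and the tropical cover, $\bigoplus_{l\in K'}d_l\otimes A_{il}$ is signed with modulus $|b_i|$. Choose $j\in K''$ with $i\in S_j^{\checkmark}$. If $\widetilde S_j=\emptyset$, then $\bar x_{\sym,j}$ is balanced by Algorithm~\ref{alg: x_sym}. If $\widetilde S_j\ne\emptyset$ and $A_{ij}$ is balanced, the term is again balanced. In both cases $d_j\otimes A_{ij}$ is balanced of modulus $|b_i|$. Otherwise $d_j\otimes A_{ij}$ has modulus $|b_i|$ and sign opposite to the $K'$-sum. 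Either way the full sum, which has modulus $|b_i|$, is balanced, so it equals $b_i$.

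The main obstacle is bookkeeping the moduli. The line~10 definitions are stated for $i\in\widetilde M$ with the condition $|\bar x_{\sym,j}|\otimes|A_{ij}|=|b_i|$. One must check that no term exceeds $|b_i|$, and this follows from $|d|\le\bar x$. One must also check that the $K'$-sum really is signed rather than of smaller modulus; this follows because $K'$ covers $i$.
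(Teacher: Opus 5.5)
Your proof is correct and follows the same route as the paper's. The dominant terms of $\bar x_{\sym}$ carry the correct sign, which settles the signed rows, and the definition of $S_j^{\checkmark}$ makes every row of $\widetilde M$ balanced once $K''$ is added. The differences are minor: you get the sign agreement on signed rows from Theorem~\ref{theorem: sym greatest solution} (using that the input system is solvable), whereas the paper reads it directly off the sign choices in Algorithm~\ref{alg: x_sym}; you also spell out the bound $|d|\le\bar x$ and why balanced rows outside $\widetilde M$ stay satisfied, which the paper leaves implicit.
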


\begin{proof}
Let $K'$ be the minimal cover chosen in Algorithm~\ref{alg: minimal solutions symm}. For any signed equation $i$, there exists $j\in K'$ with $|\bar x_{\sym,j}|\otimes|A_{ij}|=|b_i|$. By construction of $\bar x_{\sym}$ in Algorithm~\ref{alg: x_sym}, all dominant terms for signed equations have the same sign as $b_i$. Thus the nonempty sum over indices in $K'$ attains the correct modulus and sign, so every signed equation is satisfied. If a balanced equation $i$ is not satisfied by $K'$, then $i\in\widetilde M$. By definition of the sets $S_j^\checkmark$, the inclusion $i\in S_j^\checkmark$ ensures that adding component $j$ either contributes a balanced dominant term of modulus $|b_i|$ or a dominant term of opposite sign to the partial contribution from $K'$, making the total sum equal to $b_i$. Since $K''$ is chosen as a cover of $\widetilde M$ by the sets $S_j^\checkmark$, each 
$i\in\widetilde M$ has such a component $j\in K''$, and hence all remaining balanced equations are satisfied by extending $K'$ with $K''$. Therefore every vector constructed in line~12 of Algorithm~\ref{alg: minimal solutions symm} satisfies all equations, and is a solution.
\end{proof}

The above corollary does not hold in the supertropical case. Hence each candidate $d$ produced by Algorithm~\ref{alg: minimal solutions super} must be verified before accepting it as a minimal solution. The following example illustrates this necessity.

\begin{example}
    The supertropical system is given by
$$\begin{pmatrix}
0 & -1 & -1 \\
0 & 0 & -1
\end{pmatrix}
\otimes
\begin{pmatrix}
x_1 \\
x_2 \\
x_3
\end{pmatrix}
=
\begin{pmatrix}
2 \\
2^\circ
\end{pmatrix}.$$

Using Algorithm~\ref{alg: minimal solutions super}, the candidate greatest modulus solution is $\bar{x}_{\sup} = (2, 2^\circ, 3)^{\mathsf T}$. The sets $S_j(|\bar x_{\sup}|)$ are $S_1(|\bar x_{\sup}|) = \{1, 2\}$, $S_2(|\bar x_{\sup}|) = \{2\}$, and $S_3(|\bar x_{\sup}|) = \{1, 2\}$, and the minimal covers of $[m] = \{1, 2\}$ are $\{1\}$ and $\{3\}$.
For the minimal cover $K' = \{1\}$, the corresponding vector is $d = (2, \szero, \szero)^{\mathsf T}$, which does not satisfy the system. The remaining unsatisfied ghost equations are $\widetilde M = \{2\}$. The subsets for $j \notin K'$ (i.e., $j=2,3$) that cover $\widetilde M$ include $S_2(|\bar x_{\sup}|)$ and $S_3(|\bar x_{\sup}|)$. The minimal covers of $\widetilde M$ are thus $\{2\}$ and $\{3\}$. Extending $K' = \{1\}$ with $\{3\}$ yields $d = (2, \szero, 3)^{\mathsf T}$. However, this does not satisfy the system, as the first equation evaluates to $2^\circ$. Other extensions, such as $\{1, 2\}$ and $\{2, 3\}$, do yield solutions, but this shows that not every $d$ generated by Algorithm~\ref{alg: minimal solutions super} is a solution and must be checked.
\end{example}

\begin{corollary}
   Algorithms~\ref{alg: minimal solutions symm} and \ref{alg: minimal solutions super} generate only minimal modulus solutions, and for any minimal modulus solution $x$, they generate a minimal solution $d$ such that $|d|=|x|$.
\end{corollary}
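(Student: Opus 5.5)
The corollary has two parts: soundness (every output is a minimal modulus solution) and completeness (every minimal modulus solution is matched in modulus by an output). I would prove them separately, leaning on Theorems~\ref{theorem: minimal solution has k'+k''} and~\ref{theorem: minimal solution has k'+k'' supertropical}, the preceding corollary, and Propositions~\ref{prop:minimal version} and~\ref{proposition: bounded down}.

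\textbf{Completeness.} This is essentially a restatement of Theorems~\ref{theorem: minimal solution has k'+k''} and~\ref{theorem: minimal solution has k'+k'' supertropical}. For a minimal modulus solution $x$, these theorems produce a candidate $d$ built from some $K'\cup K''$ with $|d|\le|x|$. In the symmetrized case $d$ is a solution by the previous corollary. In the supertropical case $d$ is a solution by the argument inside the proof of Theorem~\ref{theorem: minimal solution has k'+k'' supertropical}, so $d$ passes the verification step. Minimality of $x$ then forces $|d|=|x|$. Hence $d$ is itself minimal, is not discarded in the final pruning step, and duplicate removal keeps some representative with the same modulus.

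\textbf{Soundness.} Every output $d$ is a solution: in the symmetrized case by the preceding corollary, in the supertropical case because only verified candidates are appended. I must show $d$ is minimal, i.e.\ it survives the pruning test (``setting $d_j\to\szero$ gives a solution''), and that surviving this test implies minimality. For this I would invoke Proposition~\ref{prop:minimal version}. Its hypothesis needs checking: whenever $\widetilde S_j=\emptyset$, either $|d_j|<\bar x_j$ or $d_j$ is balanced or ghost. This holds automatically, since every finite $d_j$ equals $\bar x_{\sym,j}$ or $\bar x_{\sup,j}$, and Algorithms~\ref{alg: x_sym} and~\ref{alg: x_sup} set these to $\bar x_j^\bullet$ or $\bar x_j^\circ$ exactly when $\widetilde S_j=\emptyset$. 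So if $d$ were not minimal, Proposition~\ref{prop:minimal version} would give a solution differing from $d$ only by zeroing one component $k$. That component has $d_k$ finite. One must also check $S_k(|\bar x|)\ne\emptyset$ in the algorithms' notation, i.e.\ the test in the last step covers $k$. If $S_k(|\bar x_{\sym}|)=\emptyset$, then $d_k$ could only have been set if $k$ belonged to a cover, and a minimal cover never uses an empty set; so every finite $d_k$ has nonempty $S_k$. The pruning step would then discard $d$. Contrapositively, retained vectors are minimal.

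\textbf{Main obstacle.} I expect the hardest part to be the reduction from ``no single-component deletion works'' to ``no smaller-modulus solution exists'', i.e.\ the hypothesis check in Proposition~\ref{prop:minimal version}. The balanced/ghost dichotomy must hold for every finite component, including those added in $K''$. I would verify this directly from the case definitions in Algorithms~\ref{alg: x_sym} and~\ref{alg: x_sup}. I would also double-check that components with $|\bar x_{\sym,j}|=\bar x_j-1$ never occur in covers, so the pruning test is exhaustive.
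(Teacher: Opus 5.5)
Your proposal is correct and follows the same route as the paper, which derives soundness from Proposition~\ref{prop:minimal version} and completeness from Theorems~\ref{theorem: minimal solution has k'+k''} and~\ref{theorem: minimal solution has k'+k'' supertropical}. Your extra checks fill in details the paper's two-line proof leaves implicit. First, the hypothesis of Proposition~\ref{prop:minimal version} holds automatically, since every finite $d_j$ equals $\bar x_{\sym,j}$ or $\bar x_{\sup,j}$. Second, every finite component lies in a minimal cover, so the pruning test is exhaustive; for the $K''$ part this uses $S_j^\checkmark\subseteq S_j(|\bar x_{\sym}|)$.
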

\begin{proof}
The first part of the claim follows from Proposition~\ref{prop:minimal version} and the second part of the claim follows from Theorem~\ref{theorem: minimal solution has k'+k''} and Theorem~\ref{theorem: minimal solution has k'+k'' supertropical}.
\end{proof}

\begin{corollary}
The solutions generated by Algorithm~\ref{alg: minimal solutions symm} are not comparable, and so are the solutions generated by Algorithm~\ref{alg: minimal solutions super}.
\end{corollary}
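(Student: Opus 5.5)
The statement follows almost directly from the preceding corollary. The plan is to argue by contradiction, using only two facts. First, every vector output by Algorithm~\ref{alg: minimal solutions symm} (respectively Algorithm~\ref{alg: minimal solutions super}) survives the final filtering steps: the discarding of non-minimal candidates in the last lines and the removal of duplicates. Second, the previous corollary says that every output is a minimal modulus solution. Here ``comparable'' means comparable in the partial order on the solution set induced by $|x|\le|y|$, which is the order used in the definition of minimal modulus solution.

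Concretely, take two distinct outputs $d$ and $d'$ of the same algorithm and suppose, without loss of generality, that $|d|\le|d'|$. Since $d$ is a solution and $d'$ is a minimal modulus solution, the definition of minimal modulus solution forces $|d|=|d'|$. So the only remaining task is to exclude two distinct outputs with equal modulus vectors.

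This exclusion is the step where I expect the real work. Every output $d$ is built from a cover $K'\cup K''$ by setting $d_j=\bar x_{\sym,j}$ (respectively $d_j=\bar x_{\sup,j}$) for $j\in K'\cup K''$ and $d_j=\szero$ otherwise. Each finite component $d_j$ therefore has modulus $|\bar x_{\sym,j}|$ (respectively $|\bar x_{\sup,j}|$), and this modulus is finite. It follows that $|d|=|d'|$ forces the two supports, and hence the index sets $K'\cup K''$, to coincide. On that common support both vectors take the values of the same fixed vector $\bar x_{\sym}$ (or $\bar x_{\sup}$), so $d=d'$ as vectors. This contradicts the fact that duplicates were discarded in the last line of the algorithm.

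The one point to check carefully is that the algorithm never assigns to a component any value other than the corresponding entry of $\bar x_{\sym}$ or $\bar x_{\sup}$, or $\szero$. A glance at lines 8 and 12 of both algorithms confirms this. With that in place, ``distinct outputs'' is the same as ``distinct moduli'', and incomparability follows. The same argument applies verbatim in the supertropical case, using Algorithm~\ref{alg: minimal solutions super} and the second half of the previous corollary.
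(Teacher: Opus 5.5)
Your proposal is correct and follows the paper's own argument. The previous corollary says every output is a minimal modulus solution and duplicates are removed, so two comparable outputs would have equal moduli and would therefore be duplicates.

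You also observe that every output is the restriction of the fixed vector $\bar x_{\sym}$ (resp.\ $\bar x_{\sup}$) to its support, with all other entries $\szero$, so equal moduli force equal vectors. That step just makes explicit what the paper compresses into the phrase ``duplicate solutions (in terms of modulus)''.
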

\begin{proof}
This follows since all of them are minimal modulus solutions and we exclude all duplicate  solutions (in terms of modulus).    
\end{proof}

Note that the algorithms may generate duplicate minimal solutions before the elimination process. This occurs when two extended minimal covers derived from different minimal covers of $[m]$ generate the same solution. Additionally, the algorithms may produce non-minimal solutions before the minimality is checked by trying to set to $\mathbf{0}$ one of the components. After extending a minimal cover of $[m]$, it is possible that a subset of this minimal cover, along with the added components, will suffice to satisfy the system.\\


We now give some examples illustrating that there may be multiple minimal modulus solutions with the same modulus.

\begin{example} [Multiple minimal modulus solutions with the same absolute value in the symmetrized case]

$$
\left(\begin{array}{ccc}
1 & 3^\bullet & 4^\bullet \\
0 & 3 & 4 \\
2 & 0 & \ominus 4
\end{array}\right) \otimes\left(\begin{array}{l}
x_1 \\
x_2 \\
x_3
\end{array}\right)=\left(\begin{array}{c}
0^\bullet \\
\ominus 0 \\
0^\bullet
\end{array}\right).
$$

One of the minimal solutions generates by Algorithm~\ref{alg: minimal solutions symm} is $(-2^\bullet, \szero, \ominus -4)^{\mathsf T}$. However, another vector with the same absolute value that also satisfies the system is $(\ominus -2, \szero, \ominus -4)^{\mathsf T}$, illustrating that Algorithm~\ref{alg: minimal solutions symm} produces only one minimal solution from the set of minimal solutions sharing the same absolute value.
\end{example}

\begin{example}[Multiple minimal modulus solutions with the same absolute value in the supertropical case]

$$
\left(\begin{array}{ccc}
1 & 3^\circ & 4^\circ \\
0 & 3 & 4 \\
2 & 0 & 4
\end{array}\right) \otimes\left(\begin{array}{l}
x_1 \\
x_2 \\
x_3
\end{array}\right)=\left(\begin{array}{c}
0^\circ \\
0 \\
0^\circ
\end{array}\right).
$$

One of the minimal solutions generates by Algorithm~\ref{alg: minimal solutions super} is $(-2^\circ, \szero, -4)^{\mathsf T}$. However, another vector with the same absolute value that also satisfies the system is $(-2, \szero, -4)^{\mathsf T}$, illustrating that Algorithm~\ref{alg: minimal solutions super} produces only one minimal solution from the set of minimal solutions sharing the same absolute value.
\end{example}

We can conclude by the following theorem which gives a coarse description of the set of solutions to a symmetrized system.

\begin{theorem}
For any solution $x$, there exists $d$ generated by Algorithm~\ref{alg: minimal solutions symm} such that the absolute value $|x|$ satisfies $|d| \leq |x| \leq |\bar x_{\sym}|$.
\end{theorem}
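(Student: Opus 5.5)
The statement combines an upper bound and a lower bound, and I will obtain each from results already proved. Throughout, $x$ is an arbitrary solution of the symmetrized system $A\otimes x=b$, so the system is solvable.

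\emph{Upper bound.} Since the system is solvable, Theorem~\ref{theorem: sym greatest solution} applies. It says that $\bar x_{\sym}$ is the greatest modulus solution, so by the definition of greatest modulus solution $|x_j|\le|\bar x_{\sym,j}|$ for all $j\in[n]$. This gives $|x|\le|\bar x_{\sym}|$ directly. If one wants it more explicitly, the proof of that theorem shows the inequality componentwise. When $|\bar x_{\sym,j}|=\bar x_j$, the bound follows because $|x|$ solves $|A|\otimes y=|b|$ and $\bar x$ is the greatest tropical solution. When $|\bar x_{\sym,j}|=\bar x_j-1$, the value $|x_j|=\bar x_j$ would violate a signed equation, and discreteness of $\mathbb{Z}_{\max}$ then forces $|x_j|\le \bar x_j-1$.

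\emph{Lower bound.} Apply Proposition~\ref{proposition: bounded down} to $x$. This yields a minimal modulus solution $x^\ast$ with $|x^\ast|\le|x|$. Next, Theorem~\ref{theorem: minimal solution has k'+k''} says that for the minimal solution $x^\ast$ there is a minimal solution $d$ produced by Algorithm~\ref{alg: minimal solutions symm} with $|d|=|x^\ast|$. Chaining the two gives $|d|=|x^\ast|\le|x|$.

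Combining the two parts gives $|d|\le|x|\le|\bar x_{\sym}|$. The one point needing care is that $d$ must be an actual output of the algorithm, that is, not discarded at its final minimality check or duplicate removal. Theorem~\ref{theorem: minimal solution has k'+k''} already asserts that $d$ is found by the algorithm, and the last part of its proof argues that $d$ is minimal and therefore survives line~13. Since only duplicates in modulus are removed, some output with modulus $|d|$ remains, and that output can serve as $d$. I expect no further obstacle: the argument is only a chaining of the earlier results.
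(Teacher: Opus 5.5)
Your proposal is correct and follows the paper's proof. The upper bound comes from Theorem~\ref{theorem: sym greatest solution} and the lower bound from Theorem~\ref{theorem: minimal solution has k'+k''}. Your extra step of first passing to a minimal modulus solution via Proposition~\ref{proposition: bounded down} is a small but worthwhile refinement. The paper instead applies Theorem~\ref{theorem: minimal solution has k'+k''} directly to an arbitrary solution $x$, and the $d$ built in that theorem's proof for a non-minimal $x$ need not be minimal, so it could be discarded at line~13.
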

\begin{proof}
    From Theorem~\ref{theorem: sym greatest solution}, we know that for any solution $x$, it holds that $|x| \leq |\bar x_{\sym}|$. Additionally, Theorem~\ref{theorem: minimal solution has k'+k''} implies that $|d| \leq |x|$ for one of $d$'s generated by Algorithm~\ref{alg: minimal solutions symm}. Combining these results, we conclude that $|d| \leq |x| \leq |\bar x_{\sym}|$ for one of such $d$'s.
\end{proof}

\begin{remark}
An analogue of this theorem holds in the supertropical case when we know that $\bar x_{\sup}$ is a solution. However, if it is not a solution then the structure of the solution set becomes more tricky and its investigation is left for future research.
\end{remark}

\section{Conclusion}
In this paper we have considered the systems of type $A\otimes x=b$ over the symmetrized and supertropical extensions of the tropical semiring, aiming to extend the well-known approach for solving the tropical systems consisting in 1) finding the greatest solution $\bar{x}$ of $A\otimes x=b$, 2) finding all minimal solutions of the same system by keeping the components of $\bar{x}$ that correspond to a minimal cover and switching off to $-\infty$ all other components. It turns out that this approach works well for the symmetrized semiring, but a slightly different greatest modulus solution $\bar{x}_{\sym}$ has to be computed (see Algorithm~\ref{alg: x_sym}). Furthermore, the minimal covers have to be extended. Then, the minimal modulus solutions can be found in a similar way by switching off the components whose indices are not taking part in the cover (see Algorithm~\ref{alg: minimal solutions symm}). For the supertropical semiring, we can similarly determine the ``candidate'' greatest solution $\bar{x}_{\sup}$ (see Algorithm~\ref{alg: x_sup}), which however may fail to be a solution, and then all minimal modulus solutions can be found similarly to the symmetrized case, by redefining the covering problem and then forming minimal modulus solutions after finding extended minimal covers (see Algorithm~\ref{alg: minimal solutions super}). 

We note that in both cases we are finding only one greatest modulus solution (or candidate solution) and only one minimal modulus solution for each extended cover. Therefore, there is a future research opportunity to, e.g., describe all minimal modulus solutions and the whole solution set more precisely. 

Regarding the perspectives of using the layered tropical semiring $T\ltimes\mathbb{R}_{\max}$ to implement the tropical Stickel protocol, it is clear that the system $A\otimes x=b$ over such semiring can be reduced to the system $A'\otimes x=b$ where $A'$ is defined by \eqref{e:A'}. However, further reduction of the system to a one-sided system over $T$ may be not straightforward, as the case of the supertropical semiring shows. This leaves a hope that using the layered tropical semiring $T\ltimes\mathbb{R}_{\max}$ instead of $T$ or $\mathbb{R}_{\max}$ could provide Alice and Bob with an extra layer of security (compared with a straightforward implementation using the semiring $T$ or $\mathbb{R}_{\max}$ only).

\section{Acknowledgement}
The authors are grateful to the referees for their careful reading, helpful suggestions and valuable comments.

\bibliography{references}

@book{Butkovi_book,
author = {P. Butkovi{\v{c}}},
title = {Max-linear {S}ystems: {T}heory and {A}lgorithms},
publisher={Springer},
address={London},
year={2010}
}

@article{DiNola87,
author={Di Nola, A. and Pedrycz, W. and Sessa, S.},
title={Fuzzy relation equations under {LSC} and {USC} $T$-norms and their {B}oolean solutions},
journal={Stochastica},
volume={11},
number={2-3},
year={1987}
}

@article{PonmaheshkumarMultiparty25,
author={Ponmaheshkumar, R. and Ramalingham, J. and Perumal, R.},
title={Multi-party key exchange scheme based on super-tropical semiring},
journal={Cryptologia},
url={https://doi.org/10.1080/01611194.2024.2435649},
year={2025},
note={published online}
}

@book{baccelli_book,
  title     = {Synchronization and Linearity: An Algebra for Discrete Event Systems},
  author    = {Baccelli, F.L. and Cohen, G. and Olsder, G.J. and Quadrat, J.P.},
  publisher = {John Wiley \& Sons Ltd},
  address   = {Chichester, UK},
  year      = {1992}
}

@INPROCEEDINGS{Stickel,
  author={Stickel, E.},
  booktitle={Third International Conference on Information Technology and Applications (ICITA'05)}, 
  title={A New Method for Exchanging Secret Keys}, 
  year={2005},
  volume={2},
  number={},
  pages={426-430},
  doi={10.1109/ICITA.2005.33}}

@article{KU_paper,
url = {https://doi.org/10.1515/jmc-2016-0064},
title = {Analysis of a key exchange protocol based on tropical matrix algebra},
author = {Kotov, M. and Ushakov, A.},
pages = {137--141},
volume = {12},
number = {3},
journal = {Journal of Mathematical Cryptology},
doi = {doi:10.1515/jmc-2016-0064},
year = {2018},
lastchecked = {2023-11-26}
}

@article{GrigorievShpilrainStickel,
  title={Tropical cryptography},
  author={Grigoriev, D. and Shpilrain, V.},
  journal={Communications in Algebra},
  year={2013},
  volume={42},
  pages={2624 - 2632},
  url={https://api.semanticscholar.org/CorpusID:6744219}
}

@Article{Spanish_Paper,
AUTHOR = {Otero Sánchez, Á. and Camazón Portela, D. and López-Ramos, J.A.},
TITLE = {On the Solutions of Linear Systems over Additively Idempotent Semirings},
JOURNAL = {Mathematics},
VOLUME = {12},
YEAR = {2024},
NUMBER = {18},
ARTICLE-NUMBER = {2904},
URL = {https://www.mdpi.com/2227-7390/12/18/2904},
ISSN = {2227-7390},
DOI = {10.3390/math12182904}
}

@article{Elbassioni,
title = {A note on systems with max–min and max-product constraints},
journal = {Fuzzy Sets and Systems},
volume = {159},
number = {17},
pages = {2272-2277},
year = {2008},
note = {Theme: Fuzzy Relations},
issn = {0165-0114},
doi = {https://doi.org/10.1016/j.fss.2008.02.020},
url = {https://www.sciencedirect.com/science/article/pii/S016501140800119X},
author = {K. Elbassioni}
}

@article{Blasius,
title = {Efficiently enumerating hitting sets of hypergraphs arising in data profiling},
journal = {Journal of Computer and System Sciences},
volume = {124},
pages = {192-213},
year = {2022},
issn = {0022-0000},
doi = {https://doi.org/10.1016/j.jcss.2021.10.002},
url = {https://www.sciencedirect.com/science/article/pii/S0022000021000994},
author = {T. Bläsius and T. Friedrich and J. Lischeid and K. Meeks and M. Schirneck}
}

@mastersthesis{Sarah_elt,
  author       = {Elt, S.},
  title        = {Cryptography in Symmetrised Tropical Algebra},
  school       = {University of Birmingham, School of Mathematics},
  address      = {Birmingham, UK},
  type         = {Master's thesis},
  year         = {2024},
  month        = mar
}

@article{izhakian_supertropical,
title={Supertropical linear algebra},
author={Z. Izhakian and L. Rowen},
journal={Pacific Journal of Mathematics},
volume={266},
number={1},
pages={43--75},
year={2013}
}

@incollection{AkianGaubertGuterman2014,
  TITLE = {{Tropical Cramer Determinants Revisited}},
  AUTHOR = {M. Akian  and S. Gaubert and A. Guterman},
  URL = {https://inria.hal.science/hal-00881203},
  NOTE = {See also arXiv:1309.6298},
  BOOKTITLE = {{Tropical and Idempotent Mathematics and Applications}},
  EDITOR = {G.L. Litvinov and S.N. Sergeev},
  PUBLISHER = {{AMS}},
  SERIES = {Contemporary Mathematics},
  VOLUME = {616},
  PAGES = {45},
  YEAR = {2014},
  HAL_ID = {hal-00881203},
  HAL_VERSION = {v1},
}

@PHDTHESIS{GaubertThesis, author={S. Gaubert}, title={Th\'eorie des syst\`emes lin\'eaires dans les dio\"\i des}, type={Th\`ese}, school={\'Ecole des Mines de Paris}, month={July}, year={1992}, }

@misc{AkianGaubertRowen2023,
      title={Linear algebra over semiring pairs}, 
      author={M. Akian and S. Gaubert and L. Rowen},
      year={2023-2025},
      note={Arxiv preprint 2310.05257},
      url={https://arxiv.org/abs/2310.05257}, 
}

@book{Cuninghame-Green1979,
author="Cuninghame-Green, R.A.",
title="Minimax algebra",
year="1979",
publisher="Springer",
address="Berlin, Heidelberg",
series="{L}ecture {N}otes in {E}conomics and {M}athematical {S}ystems",
volume={166},
isbn="978-3-642-48708-8",
doi="10.1007/978-3-642-48708-8_1",
url="https://doi.org/10.1007/978-3-642-48708-8_1"
}

@article{vorobyev1967extremal,
  title={Extremal algebra of positive matrices},
  author={Vorobyev, N.N.},
  journal={Elektron. Informationsverarbeitung und Kybernetik},
  volume={3},
  number={1},
  pages={39--72},
  year={1967},
note={in Russian}
}

@article{Markovskii2004,
  author    = {Markovskii, A. V.},
  title     = {Solution of Fuzzy Equations with Max-Product Composition in Inverse Control and Decision Making Problems},
  journal   = {Automation and Remote Control},
  volume    = {65},
  number    = {9},
  pages     = {1486--1495},
  year      = {2004},
  month     = {Sep},
  doi       = {10.1023/B:AURC.0000041426.51975.50},
  url       = {https://doi.org/10.1023/B:AURC.0000041426.51975.50},
  issn      = {1608-3032}
}

@Inbook{karp1972reducibility,
author="R. Karp",
editor="Miller, Raymond E.
and Thatcher, James W.
and Bohlinger, Jean D.",
title="Reducibility among Combinatorial Problems",
bookTitle="Complexity of Computer Computations: Proceedings of a symposium on the Complexity of Computer Computations, held March 20--22, 1972, at the IBM Thomas J. Watson Research Center, Yorktown Heights, New York, and sponsored by the Office of Naval Research, Mathematics Program, IBM World Trade Corporation, and the IBM Research Mathematical Sciences Department",
year="1972",
publisher="Springer US",
address="Boston, MA",
pages="85--103",
isbn="978-1-4684-2001-2",
doi="10.1007/978-1-4684-2001-2_9",
url="https://doi.org/10.1007/978-1-4684-2001-2_9"
}

@misc{AS7,
      author = {S. Alhussaini and S. Sergeev},
      title = {Cryptanalysis of Multi-Party Key Exchange Protocols over a Modified Supertropical Semiring},
      howpublished = {Cryptology {ePrint} Archive, Paper 2025/2062},
      year = {2025},
      url = {https://eprint.iacr.org/2025/2062}
}
\bibliographystyle{plain}

\end{document}